\newcommand{\Z}{\mathbb{Z}}
\numberwithin{equation}{section}
\theoremstyle{plain} 
\newtheorem{thm}[equation]{Theorem}
\newtheorem{lem}[equation]{Lemma}
\newtheorem{prop}[equation]{Proposition}
\theoremstyle{definition}
\newtheorem{defn}[equation]{Definition}
\theoremstyle{remark}
\newtheorem{rem}[equation]{Remark}
\title{The invariance of knot lattice homology}
\author{Matthew Jackson}
\begin{document}

\begin{abstract}
Assume $\Gamma$ is a negative-definite forest with exactly one unframed vertex, and $M(\Gamma)$ is the resulting plumbed 3-manifold with a knot embedded. We show that the filtered lattice chain homotopy type of $\Gamma$ is an invariant of the diffeomorphism type of $M(\Gamma)$.
\end{abstract}

\maketitle

\section*{Introduction}

Lattice (co)homology is a theory introduced by András Némethi \cite{nemethi} for negative definite plumbed 3-manifolds and is conjecturally isomorphic to Heegaard Floer homology. Oszváth, Stipsicz and Szabó \cite{knotsinlattice} defined a knot refinement for lattice homology which is a filtration on the lattice chain complex.  It is natural to ask whether this filtration is in fact an invariant of the knot (and thus if the ``knot filtration" is well defined).

We now recall the setup. Let $G$ be a negative definite framed forest. To define the resulting manifold $M(G)$, see the graph as a link in the boundary of $\mathbb{D}^4$ (by replacing vertices by loops and edges by crossings) and plumb according to the framing on each vertex. This defines a new 4-manifold whose boundary is the 3-manifold $M(G)$. Similarly in the knot lattice setup we let $\Gamma$ be a negative definite plumbed forest with exactly one unframed vertex $v_0$ and let $G:=\Gamma - \{v_0\}$ . The same construction yields $M(G)$ and a knot $K_{v_0}$ embedded in it, thus we will often refer to $M(\Gamma) := (M(G),K_{v_0})$. Oszváth, Stipsicz and Szabó \cite{knotsinlattice} define the knot filtration combinatorially using the graph $\Gamma$.

Our main result is:

\begin{thm}\label{thm1}Let $\Gamma$ be a negative definite forest with exactly one unframed vertex. The filtered lattice chain homotopy type of $\Gamma$ is an invariant of the oriented diffeomorphism type of $M(\Gamma)$.
\end{thm}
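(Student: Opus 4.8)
The plan is to reduce the stated diffeomorphism invariance to invariance under a generating set of combinatorial moves on the graph, and then, for each such move, to construct an explicit filtered chain homotopy equivalence between the two knot lattice complexes. The composition of these equivalences along a sequence of moves connecting any two graphs representing the same pair will then give the theorem.

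First I would appeal to the plumbing calculus for negative-definite graphs: two negative-definite forests give orientation-preserving diffeomorphic plumbed $3$-manifolds precisely when they are related by a finite sequence of blow-ups and blow-downs, that is, insertion and deletion of a $(-1)$-framed vertex together with the accompanying framing adjustments on its neighbours. Since here the pair $(M(G),K_{v_0})$ must be preserved, one works with a relative version in which the unframed vertex $v_0$ is carried along; the blow-down is still performed at a $(-1)$-framed vertex, but such a vertex may now be incident to $v_0$. The moves therefore split into two families: those supported in $G=\Gamma\setminus\{v_0\}$ and disjoint from $v_0$, and those that modify the edge or vertex incident to $v_0$. It thus suffices to prove that the filtered lattice chain homotopy type is unchanged by a single blow-up of each type.

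For a blow-up supported in $G$ and disjoint from $v_0$, the plan is to show that the filtered complex changes by a filtered chain homotopy equivalence, where the underlying unfiltered statement is the blow-up invariance of lattice homology established by N\'emethi \cite{nemethi}. The new content is that the equivalence can be chosen to respect the knot filtration. Concretely, the blow-up enlarges the lattice $\Z^{|V(G)|}$ by one coordinate, and the strategy is to exhibit a deformation retraction of the enlarged lattice complex onto a subcomplex isomorphic to the original and to verify that the retraction and its chain homotopy are nonincreasing for the Alexander filtration induced by $v_0$. Because such a blow-up leaves the combinatorics near $v_0$ untouched, the filtration function is transported without change, so that this last verification reduces to a direct estimate on the new coordinate.

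The heart of the argument, and the step I expect to be the main obstacle, is the blow-up that alters the vertex or edge incident to $v_0$, for here the filtration function itself changes. In this case the Alexander level of a lattice point is modified by the effect of the new coordinate on the homology class linking $K_{v_0}$, so the naive coordinate-collapsing map of the previous step need not be filtered. The plan is to define the filtered chain homotopy equivalence so that the new coordinate is summed out \emph{against} its contribution to the Alexander grading, and then to prove by a careful estimate that this map, a chosen homotopy inverse, and the connecting chain homotopies are all filtered. The delicate point is controlling the interaction between the knot filtration and the $(-1)$-vertex adjacent to $v_0$ so that no filtration level is raised; once this is in hand, invariance under every generating move follows, and composing the resulting equivalences completes the proof.
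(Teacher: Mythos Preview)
Your overall architecture is the paper's: reduce to Neumann's calculus so that any two negative-definite presentations of the same $(M,K)$ are connected by blow-ups and blow-downs, and then verify each move induces a filtered chain homotopy equivalence. So at the strategic level you are on the right track, and you are correct that the new content beyond \cite{knotsinlattice} is the filtered (not merely graded) nature of the blow-up/blow-down maps and homotopies.

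Where your plan goes wrong is in the decomposition of the moves. You split according to whether the $(-1)$-vertex is near $v_0$ or not, and claim that blow-ups disjoint from $v_0$ are essentially free because ``the filtration function is transported without change.'' This is not true: the Alexander level $A[K,E]=\tfrac12\bigl(L_{[K,E]}(\Sigma)+\Sigma^2\bigr)$ is a global quantity, since $\Sigma$ is determined by the full intersection form and $L_{[K,E]}(v_0)$ is expressed through $g[K+2v_0^*,E]$, which is a minimum over \emph{all} subsets of $E$. A blow-up anywhere in $G$ alters both $\Sigma$ and these $g$-values, and the paper has to do a genuine computation (comparing $g[(K,p,j)+2v_0^*,E]$ with $g[(K,p+j)+2v_0^*,E]$, etc.)\ to bound the filtration shift, with no simplification coming from distance to $v_0$. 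Conversely, the case you flag as the ``heart of the argument''---a blow-up touching $v_0$---is not harder than the general case; after using the connected sum formula to make $v_0$ a leaf, the proofs are uniform in the location of the blow-up.

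The decomposition the paper actually needs, and which your proposal misses entirely, is by the \emph{type} of blow-up: generic (disjoint $(-1)$-vertex), blow-up at a vertex, and blow-up along an edge. The generic case is handled in \cite{knotsinlattice}; the vertex case requires checking that the Ozsv\'ath--Stipsicz--Szab\'o maps $P$, $R$ and the homotopy $H_0$ are filtered; the edge case is substantially more involved because the new $(-1)$-vertex $e$ has two neighbours, the analogue of Lemma~7.3 fails, and one must build the maps $S$, $T$ and the stabilised homotopy $H$ from scratch and separately verify they are chain maps, graded, and filtered. Your proposal does not anticipate this edge case at all, and it is where most of the work lies.
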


To prove it, we will show the following propositions:

\begin{prop}\label{prop11}
Let $\Gamma_1$ and $\Gamma_2$ be two negative definite forests with exactly one unframed vertex each, such that $M(\Gamma_1)$ and $M(\Gamma_2)$ are diffeomorphic. Then, blow-ups and blow-downs are sufficient to turn $\Gamma_1$ into $\Gamma_2$. 
\end{prop}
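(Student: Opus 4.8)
The plan is to reduce the statement to Neumann's plumbing calculus \cite{nemethi} and then to verify that, within the class of negative-definite forests carrying a single unframed vertex, every move in that calculus is either impossible or realizable as a composition of blow-ups and blow-downs. Recall that Neumann's calculus supplies a complete list of moves --- blow-ups and blow-downs together with the $0$-chain absorption, oriented-handle absorption, and splitting moves --- relating any two plumbing graphs whose associated closed $3$-manifolds are orientation-preservingly homeomorphic. First I would encode the pair $(M(G),K_{v_0})$ by the forest $\Gamma$ with its distinguished vertex $v_0$, treated as a marked (unframed) vertex, so that the object to be controlled is a plumbing graph with one marked vertex rather than a bare closed graph.

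To bring the closed calculus to bear while retaining the knot, I would fill $v_0$ with framing $-N$ for $N \gg 0$. Since $G = \Gamma - \{v_0\}$ is negative definite, for $N$ large the filled tree $\Gamma_{-N}$ is again negative definite, and $M(\Gamma_{-N})$ is obtained from $(M(G),K_{v_0})$ by $(-N)$-surgery on $K_{v_0}$; a diffeomorphism of pairs thus induces an orientation-preserving diffeomorphism $M(\Gamma_{1,-N}) \cong M(\Gamma_{2,-N})$. Applying Neumann's theorem to these two closed negative-definite graphs produces a sequence of moves between them, and the three points to check are: (i) no $0$-framed vertex is ever needed, since a framing $0$ destroys negative-definiteness, which excludes the $0$-chain and handle-absorption moves; (ii) the acyclicity of a forest rules out the moves that create or absorb cycles, leaving only the splitting moves that reflect the connected-sum decomposition coming from the distinct tree components; and (iii) the surviving moves are precisely $(\pm 1)$-blow-ups and blow-downs, which --- taken with $(-1)$-vertices --- preserve both negative-definiteness and the tree structure. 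Because $v_0$ carries the large framing $-N$ it is never a $(-1)$-vertex, so it is never blown down; the only moves meeting $v_0$ are blow-ups adjacent to it, which descend to genuine blow-ups of the marked graph and alter only the (irrelevant) framing of $v_0$. Deleting the $-N$ framing from the resulting sequence then yields a chain of blow-ups and blow-downs carrying $\Gamma_1$ to $\Gamma_2$.

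The main obstacle is the interaction between the calculus and the marked vertex. I must ensure that the moves produced for the capped trees can be organized so as to fix $v_0$ throughout --- equivalently, that the sequence descends to the category of forests with one unframed vertex --- and that the diffeomorphism of \emph{pairs}, rather than merely of the underlying manifolds, is genuinely used, so that the knot $K_{v_0}$ is matched and not just the ambient $M(G)$. Establishing this relative form of the calculus, and separately handling the finitely many exceptional small graphs (lens-space and reducible pieces) for which Neumann's uniqueness statement requires case-by-case attention, is where the real work lies; once these are in place, the reduction to blow-ups and blow-downs is formal.
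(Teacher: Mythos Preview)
Your overall strategy---replacing $v_0$ by a vertex with a very negative framing $-N$ and then appealing to Neumann's calculus---matches the paper's. The divergence is in how Neumann is invoked. You try to extract from Neumann a \emph{sequence of moves} connecting $\Gamma_{1,-N}$ to $\Gamma_{2,-N}$ and then argue, move by move, that each step stays in the class of negative-definite forests and avoids $v_0$. The paper instead uses the \emph{uniqueness of normal form}: it first blows each $\Gamma_i$ down to a reduced representative, then shows explicitly that a reduced negative-definite forest with $v_0$ framed by a sufficiently negative $-n$ can be brought to Neumann normal form by a short list of local substitutions, with $v_0$ recoverable afterwards as the unique vertex of smallest framing. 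Equality of normal forms then yields an isomorphism of the reduced graphs directly, with no move-sequence to track.

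Your argument has a genuine gap at steps (i)--(iii). Neumann's theorem produces a chain of moves between $\Gamma_{1,-N}$ and $\Gamma_{2,-N}$, but nothing forces the \emph{intermediate} graphs in that chain to be negative definite or acyclic; the claim ``a framing $0$ destroys negative-definiteness, which excludes the $0$-chain and handle-absorption moves'' presupposes exactly what needs to be shown. Likewise, nothing prevents $+1$-blow-ups along the way, nor blow-downs adjacent to $v_0$ that raise its framing until it is no longer distinguishable---the framing at $v_0$ changes each time a neighbouring $(-1)$-vertex is blown down, and you have no a~priori bound on the length of the sequence, so ``$N$ large'' does not protect $v_0$. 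You correctly flag this as ``the main obstacle,'' but the mechanism you sketch for resolving it (organizing the moves to fix $v_0$, handling exceptional pieces by hand) is not yet an argument. The paper's static comparison of normal forms sidesteps all of this: the question of intermediate graphs never arises, and the identification of $v_0$ is handled by the injectivity of the explicit reduced-to-normal construction.
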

 
\begin{prop}\label{prop02}
Let $\Gamma_1$ and $\Gamma_2$ be two negative definite forests that differ by a blow-up. Then the filtered lattice chain complexes $(\mathbb{CF}^-(G_1),A)$ and $(\mathbb{CF}^-(G_2),A')$ and chain homotopic.
\end{prop}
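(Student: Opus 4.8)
The plan is to realise the blow-up as the addition of a single exceptional coordinate and then to collapse that coordinate by a filtered cancellation argument. Recall that $\mathbb{CF}^-(G)$ is generated over $\mathbb{F}[U]$ by the lattice points of $L=\Z^{\mathcal{V}}$ (within a fixed $\mathrm{Spin}^c$ orbit), assembled into a cube complex whose differential sends each cube to its codimension-one faces weighted by powers of $U$ read off from the weight function $w$, and that $A$ is the filtration induced by the unframed vertex $v_0$. A blow-up enlarges the graph by one vertex $e$ with $e\cdot e=-1$, so $L_2=L_1\oplus\Z e$, while lowering by one the framings of the neighbours of $e$. I would first dispose of the isolated blow-up, in which $e$ is disconnected from the rest of $\Gamma_1$: there $w$ splits as a sum and $\mathbb{CF}^-(G_2)\cong\mathbb{CF}^-(G_1)\otimes_{\mathbb{F}[U]}C_e$, where $C_e$ is the complex of the single $e$-line. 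Since $C_e$ is filtered chain homotopy equivalent to $\mathbb{F}[U]$ concentrated in one Alexander grading, the equivalence follows immediately.

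For the vertex and edge blow-ups the mechanism is the behaviour of $w$ in the new direction. Fixing the $L_1$-coordinates of a point $l$ and restricting $w$ to the $e$-line $\{l+ke:k\in\Z\}$, the relation $e\cdot e=-1$ makes this restriction a strictly convex function of $k$ with a unique minimiser, and the successive differences are exactly what is needed to produce unit ($U^0$) components of the differential between neighbouring generators. Consequently each $e$-fibre is a staircase carrying a single homologically essential generator, namely the minimiser. I would package this as a filtered cancellation lemma: whenever a component of the differential between two generators is a unit, the pair may be cancelled with the usual correction of the differential on the survivors, the point requiring proof being that these corrections never raise Alexander filtration level. Iterating the cancellation over all $e$-fibres eliminates the extra direction.

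Carrying out the cancellation yields an explicit deformation retraction of $\mathbb{CF}^-(G_2)$ onto the subcomplex spanned by the fibrewise minimisers, which is indexed by the $L_1$-coordinates. I would then identify this subcomplex with $\mathbb{CF}^-(G_1)$ by checking that the induced weights and differential on the survivors coincide with those of $G_1$; the computational reason is the unimodular change of basis $f_{v_i}=v_i+c_i e$ that splits the intersection form of $G_2$ orthogonally as the intersection form of $G_1$ together with a single $(-1)$ summand, so that the survivors are precisely the proper transforms of the lattice points of $G_1$. The remaining task, and the delicate one, is the filtration: I would compute $A'$ on $G_2$ in terms of $A$ and the $e$-coordinate through this splitting and verify that each surviving generator carries the same Alexander grading it had in $\mathbb{CF}^-(G_1)$, and that the retraction, its homotopy inverse, and the chain homotopies are all filtered.

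The main obstacle I anticipate is exactly this filtration bookkeeping when $e$ is attached to $v_0$ or to a neighbour of $v_0$, since then the blow-up perturbs the very data defining $A$. In that situation one must track how the minimiser $k$ along each $e$-fibre varies with the $v_0$-coordinate and confirm that the units used in the cancellation still respect $A$. I expect this to reduce to a short case analysis according to the position of $e$ relative to $v_0$ (isolated, on a vertex, or on an edge, and whether the affected vertex or edge meets $v_0$), each case amounting to a single inequality between weight differences that guarantees the cancellation is filtered.
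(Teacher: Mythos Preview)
Your plan is essentially the paper's: the maps $P,R$ (vertex case) and $S,T$ (edge case), together with $C_w=C_0^\infty$ where $C_0=\mathrm{Id}+\partial H_0+H_0\partial$, are exactly an explicit implementation of the fibrewise cancellation you describe, and the heart of both arguments is verifying that $H_0$ (hence $H$, $C$, and the blow-up/blow-down maps) is filtered.

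Two points of calibration. First, the ``unique minimiser'' picture is slightly too clean: because $g[K,E]=\min_{I\subset E}f[K,I]$ is a minimum over subsets, the unit direction along the $e$-fibre depends on \emph{which} $I$ realises the minimum, and this forces the paper's type-a/type-b dichotomy (Definition~7.5 of \cite{knotsinlattice}) governing whether one cancels $[K,E\cup e]$ against $[K,E]$ or against $[K+2e^*,E]$. Second, the filtration check is not a single inequality per case: in the borderline situations $K(e)=-1$ (type-a) and $K(e)=-3$ (type-b) one must analyse whether the subsets $I$ attaining $g[K,E]$ meet the neighbour of $v_0$, and rule out a putative minimiser by a short contradiction (see the proofs that $H_0$ is filtered in Sections~2 and~3). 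Your sketch anticipates this work but underestimates its length; once you unfold it, your argument and the paper's coincide.
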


We will prove Proposition \ref{prop11} in Section 1 and Proposition \ref{prop02} in Sections 2 and 3. 

This topic was suggested by András Stipsicz for an internship I did under his supervision at the Alfréd Rényi Institute in Budapest. I would like to thank him for his precious help.

\section{Sufficiency of blow-ups and blow-downs}

First some preliminary definitions:

\begin{defn}Let $\Gamma$ be a graph with framings on all its vertices except for $v_0$ and $G$ the graph obtained from $\Gamma$ by deleting $v_0$ and all edges adjacent to it. $\Gamma$ is called \emph{negative definite} if the intersection matrix of $G$ (with the framings on the diagonal) is negative definite. 
\end{defn}

We refer to \cite{neumann} for the definitions of blow-down and blow-up (operation R1 and its inverse) and adapt them to graphs with an unframed vertex by considering it as having framing $-\infty$: the only forbidden operation is to blow-down the unframed vertex.
We only allow blow-ups or blow-downs that stay in the class of graphs considered (negative definite trees with exactly one unframed vertex). Therefore there are three types of blow-ups (and blow-downs) to study : generic blow-up, the blow-up of a vertex and the blow-up of an edge.  

\begin{defn}
Two negative definite forests with exactly one unframed vertex $\Gamma$ and $\Gamma'$ are said to be \emph{equivalent} if they are related by a finite sequence of blow-ups and blow-downs. This defines an equivalence relation on the set of negative definite graphs with exactly one unframed vertex.  Denote their equivalence class by $[\Gamma]$.
\end{defn}
\begin{rem}
Notice that if two trees $\Gamma_1$ and $\Gamma_2$ are equivalent, then they define diffeomorphic 3-manifolds and knots $M(\Gamma_1)\cong M(\Gamma_2)$.
\end{rem}

\begin{defn}A graph $\Gamma$ with exactly one unframed vertex is said to be \textit{reduced} if no blow-downs can be applied to $\Gamma$.
\end{defn}

\begin{rem}Let $\Gamma'$ denote the graph $\Gamma$ after a blow-down. A simple exercise in linear algebra shows that if $\Gamma$ is negative definite, then so is $\Gamma'$. This implies that any class $[\Gamma]$ contains reduced graphs.
\end{rem}

\begin{prop}\label{prop1}
Let $\Gamma$ and $\Gamma'$ be negative definite trees with exactly one unframed vertex.  
\begin{enumerate}
\item The equivalence class $[\Gamma]$ contains a unique reduced graph.
\item Suppose $\Gamma_1$ and $\Gamma_2$ are in reduced form and $M(\Gamma_1)\cong M(\Gamma_2)$. Then $\Gamma_1 \cong \Gamma_2$.
\end{enumerate}
\end{prop}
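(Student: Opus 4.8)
The plan is to handle the two parts by quite different means: part (1) is a purely combinatorial statement about the rewriting system generated by blow-downs, whereas part (2) is where the topology enters and will rest on Neumann's plumbing calculus \cite{neumann}. It is worth noting at the outset that once both parts are in hand, Proposition \ref{prop11} follows at once: given $M(\Gamma_1)\cong M(\Gamma_2)$, I would reduce each graph by blow-downs to a reduced representative $R_i$, observe that $M(R_i)\cong M(\Gamma_i)$, invoke part (2) to get $R_1\cong R_2$, and chain the equivalences $\Gamma_1\sim R_1=R_2\sim\Gamma_2$.

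For part (1), I would regard blow-down as a reduction relation $\to$ on negative definite forests with one unframed vertex, whose reflexive--symmetric--transitive closure is exactly the equivalence $\sim$ defining $[\Gamma]$ (a blow-up being literally the inverse of a blow-down, the three blow-up types being the inverses of the valence $0,1,2$ blow-downs). Since each blow-down strictly decreases the number of vertices, $\to$ is terminating, so by Newman's lemma it suffices to prove local confluence: any two single blow-downs out of a graph can be completed to a common graph. The crucial ingredient is a small linear-algebra lemma: in a negative definite graph no two adjacent vertices can both carry framing $-1$, since the corresponding $2\times 2$ principal minor equals $(-1)(-1)-1\cdot 1=0$, and a singular $2\times 2$ submatrix cannot sit inside a negative definite matrix. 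Hence any two distinct blow-down sites $v\neq v'$ (each a $(-1)$-framed vertex of valence $\le 2$, necessarily different from the unframed $v_0$, which carries framing $-\infty$) are non-adjacent. I would then check, by cases according to the valences of $v,v'$ and whether they share a neighbour, that blowing down $v$ leaves $v'$ a legal blow-down site and vice versa, and that the two operations commute — the framing increments add and the newly created edges lie among the neighbourhoods $N(v)$, $N(v')$, which do not contain the other site. This gives local confluence, hence confluence, hence a unique normal form in each $\sim$-class, namely the unique reduced graph.

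For part (2), the engine is Neumann's theorem \cite{neumann}: the minimal plumbing graph of an oriented graph manifold is unique up to isomorphism. The knot is incorporated by reading the unframed vertex $v_0$ as an arrowhead, so that $\Gamma$ is a plumbing description of the knot complement $N=M(G)\setminus\nu(K_{v_0})$ together with its meridian. An orientation-preserving diffeomorphism of pairs $(M(G_1),K_1)\cong(M(G_2),K_2)$ restricts to an orientation-preserving diffeomorphism $N_1\cong N_2$ taking meridian to meridian, and conversely the pair is recovered by Dehn filling along the meridian; so the pairs are diffeomorphic precisely when these decorated graph manifolds are. I would then argue that a reduced negative definite forest is the minimal normal form of its graph manifold, so that Neumann's uniqueness forces $\Gamma_1\cong\Gamma_2$.

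The hard part is part (2), and specifically reconciling the combinatorial notion of \emph{reduced} used here with the minimality hypothesis in Neumann's theorem. Three points will need care: that the unframed (arrowhead) vertex is correctly handled by the calculus, in the link version of Eisenbud--Neumann; that ``reduced'' — meaning the absence of any valence $\le 2$ blow-down, so as to remain within the class of forests — genuinely matches Neumann's minimal form, which requires ruling out or separately treating higher-valence $(-1)$-vertices that cannot be blown down inside forests; and that the exceptional small Seifert-fibered cases in Neumann's classification do not occur, which the negative definiteness hypothesis ought to exclude. Pinning down these three points, rather than the formal confluence argument of part (1), is where the substantive work lies.
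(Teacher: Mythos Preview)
Your confluence argument for part (1) is correct and is a genuinely different route from the paper's. The paper does \emph{not} treat part (1) combinatorially: instead it proves both parts at once by a single trick. Given a reduced $\Gamma$, it replaces the unframed vertex $v_0$ by a framed vertex of weight $-n$ with $n$ very large (so large that $-n$ is strictly the smallest framing appearing), obtaining a framed graph $\Gamma(n)$; it then explicitly modifies $\Gamma(n)$ near $v_0$ so that the result $\Gamma_{\mathrm{norm}}$ satisfies all of Neumann's normal-form conditions N1--N6, and observes that this modification is injective on reduced graphs. Uniqueness of Neumann's normal form (Theorem~4.2 of \cite{neumann}) for the closed manifold $M(\Gamma(n))$ then forces any other reduced representative to yield the same $\Gamma_{\mathrm{norm}}$, and the marker ``smallest framing'' lets one recover $v_0$ and hence $\Gamma$. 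Your Newman's-lemma proof of (1) is cleaner and avoids Neumann entirely for that half; the paper's approach, on the other hand, gets (1) and (2) simultaneously from a single invocation of Neumann.

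For part (2) you have correctly located the difficulty but not resolved it, and the paper's trick is precisely what fills the gap you flag. Your worry that ``reduced'' need not coincide with Neumann's normal form is well-founded: it does not, and the paper's Figures~\ref{change1}--\ref{change2} are exactly the extra modifications needed to pass from a reduced $\Gamma(n)$ to normal form (condition N3 is the one that fails). Rather than work in the link/arrowhead calculus and confront the bookkeeping you list, the paper stays in the closed-manifold calculus by the large-framing substitution; the point is that for $n$ large enough the vertex $v_0$ is never touched by any normal-form reduction and is uniquely identifiable afterwards. If you want to complete your version of (2), you would need either to carry out the Eisenbud--Neumann link-calculus argument in full (checking that for negative definite forests with one arrowhead the only applicable moves are blow-ups/downs, so reduced $=$ normal), or to adopt the paper's device.
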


\begin{proof} If $\Gamma$ is reduced, let $\Gamma_{norm}=\Gamma_{norm}(n)$ denote the graph $\Gamma$ with the following changes : choose $-n$ large enough such that $\Gamma(n)$ satisfies conditions N1, N2, N4, N5, and N6 of normal form (see \cite{neumann}) and $n$ is the smallest framing on $\Gamma(n)$. On any component of $\Gamma(n)$ with same shape as figure \ref{nochange} ($k\geqslant 0$ and $e\leqslant -1$) don't apply any changes.
\begin{figure}[H]
	\begin{tabular}{cc}
		\includegraphics[width=0.5\linewidth]{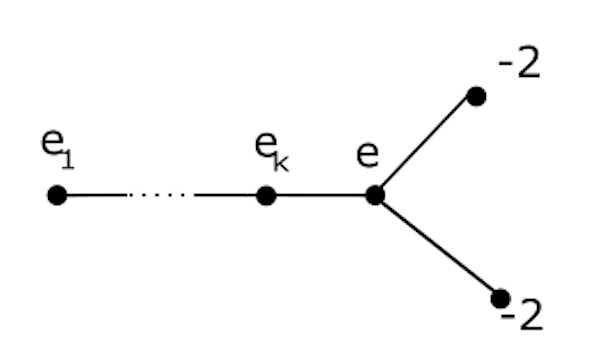}
		\includegraphics[width=0.5\linewidth]{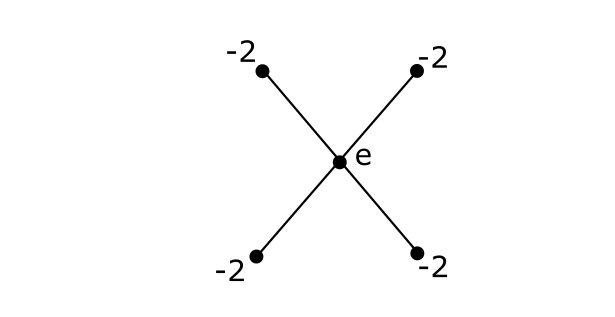}
	\end{tabular}
	\caption{\label{nochange} These components don't change
	} 
\end{figure}
On all other components,
if $e\leqslant -3$ or $e$ has at least 2 edges incident to it and $m\geqslant 0$, (we use the writing conventions from \cite{neumann})
\begin{figure}[H]
	\begin{tabular}{cc}
		\includegraphics[width=0.5\linewidth]{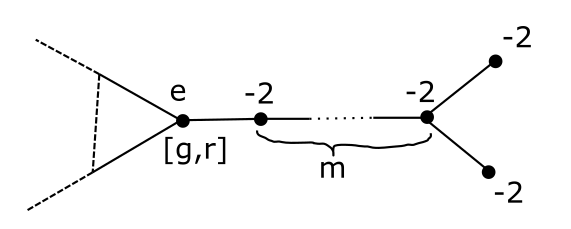}
		\includegraphics[width=0.5\linewidth]{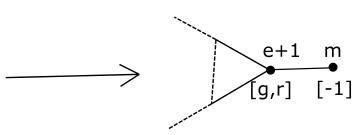}
	\end{tabular}
	\caption{\label{change1} replace the left figure with the right figure
	} 
\end{figure}
If $m\leqslant -1$,
\begin{figure}[H]
	\begin{tabular}{cc}
		\includegraphics[width=0.5\linewidth]{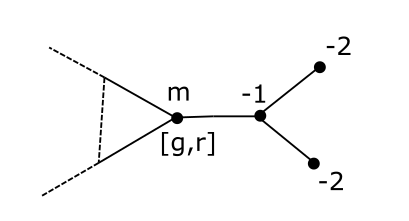}
		\includegraphics[width=0.5\linewidth]{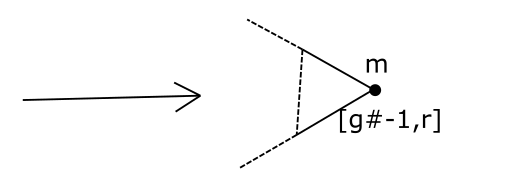}
	\end{tabular}
	\caption{\label{change2} replace the left figure with the right figure
	} 
\end{figure}
These changes make $\Gamma_{norm}$ satisfy condition N3 and respect the diffeomorphism type of $\Gamma(n)$.  Therefore, $\Gamma_{norm}$ is the normal form of $\Gamma(n)$.  Now suppose $\Gamma'$ is equivalent to $\Gamma$ and reduced. This implies that for $-m$ large enough $\Gamma'(m)$ also has normal form $\Gamma_{norm}$ (see \cite{neumann} Theorem 4.2, one might need to increase $-n$). Since $n$ is the smallest framing of $\Gamma_{norm}$, it is possible to identify the unframed vertices of $\Gamma$ and $\Gamma'$,  and figures \ref{change1} and  \ref{change2}  define an injective map, therefore $\Gamma' \cong \Gamma$.   This shows (1). (2) is a consequence of \cite{neumann} Theorem 4.2,  of the injectivity of the above construction and of the fact that $\Gamma_{norm}$ is in normal form implies $\Gamma(n)$ is reduced.
\end{proof}

Proposition \ref{prop11} is now a direct consequence of Proposition \ref{prop1}. 
\section{Blow-up and blow-down of a vertex}

Let $\Gamma$ be a negative definite tree with an unframed vertex $v_0$ and 
\(G:= \Gamma - \{v_0\}\).
Notice that we can consider each component of $\Gamma -\{v_0\}$ independently and then apply the connected sum formula (\cite{knotsinlattice}, Theorem 4.8) in order to check the invariance of the filtered lattice chain homotopy type. Therefore we can suppose $v_0$ is a leaf. Let $v$ be a framed vertex of $\Gamma$ with framing $m_v$.  $\Gamma'$ will denote the graph obtained by blowing up $v$.
Thus $v\in G':=\Gamma' - \{v_0\}$ has framing $m_v -1$, $G'$ has a new vertex $w$ with framing $-1$, and $v$ and $w$ are connected by an edge. Let $\mathbb{CF^-}(G)$ denote the lattice chain complex of $G$ and let $A$ denote the knot filtration.  As in \cite{knotsinlattice} we define the blow-down map:
\[
\begin{array}{cccl}
P : & \mathbb{CF^-}(G') & \to & \mathbb{CF^-}(G)\\
& [(K,p,j),E] & \mapsto & 
\begin{cases}
U^s[(K,p+j),E]& \text{if } w\notin E\\
0             & \text{if } w\in E
\end{cases}
\end{array}
\]
with 
\[
s := g[(K,p+j),E] - g[(K,p,j),E] + \frac{j^2 -1}{8}
\]
and the blow-up map:
\[
\begin{array}{cccl}
R : & \mathbb{CF^-}(G) & \to & \mathbb{CF^-}(G')\\
& [(K,p),E] &\mapsto & C_w([K,p+1,-1),E]
\end{array}
\]
The appendix of \cite{knotsinlattice} (on arXiv) shows that $P$ and $R$ are both graded chain maps and that they are graded homotopy equivalences. Let's check they respect the knot filtrations.

\begin{lem}
$P$ is a filtered chain map
\end{lem}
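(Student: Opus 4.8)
The plan is to compare the two knot filtrations generator by generator. Recall from \cite{knotsinlattice} that the knot filtration is recorded by an Alexander grading function, which I denote $A$ on $\mathbb{CF^-}(G)$ and $A'$ on $\mathbb{CF^-}(G')$, and that a chain map is \emph{filtered} precisely when it does not raise this grading; concretely, it suffices to verify
\[
A\bigl(P[(K,p,j),E]\bigr) \leqslant A'[(K,p,j),E]
\]
on every generator. Since $P$ sends every generator with $w \in E$ to $0$, the inequality is automatic on those, so from here on I would restrict to generators with $w \notin E$, for which $P[(K,p,j),E] = U^s[(K,p+j),E]$.

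Next I would dispose of the factor $U^s$. Multiplication by $U$ lowers the Alexander grading by $1$ (it lowers the Maslov grading by $2$), so that $A(U^s[(K,p+j),E]) = A[(K,p+j),E] - s$, and the lemma reduces to the single numerical inequality
\[
A[(K,p+j),E] - s \leqslant A'[(K,p,j),E],
\qquad
s = g[(K,p+j),E] - g[(K,p,j),E] + \tfrac{j^2-1}{8}.
\]
Here I would stress that the exponent $s$ was defined exactly so that $P$ preserves the Maslov grading $g$, a fact already established in the appendix of \cite{knotsinlattice}; the content of the present lemma is that the \emph{other} piece of data, the Alexander grading, is respected as well, so the whole difficulty is now isolated in how $A$ transforms under the blow-up.

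Finally I would substitute the explicit combinatorial formulas for $A$, $A'$ and $g$. Because $w$ is a valence-one vertex of framing $-1$ attached to $v$, and the framing at $v$ drops from $m_v$ to $m_v - 1$, the change in the intersection form and in the relevant characteristic-vector evaluations is completely explicit, and the dependence on the new coordinate $j$ enters only through controlled quadratic terms. Expanding both sides, the $g$-contributions cancel by the very definition of $s$, and the inequality collapses to a comparison between $\tfrac{j^2-1}{8}$ and the change of the Alexander function in the $w$-direction.

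\textbf{Main obstacle.} I expect the hardest part to be precisely this last piece of bookkeeping: fixing the correct normalization and sign conventions for $A$ and $A'$ (in particular the half-integer shift $\tfrac{j^2-1}{8}$ and whether ``filtered'' is meant as non-increasing or non-decreasing), and then checking the resulting quadratic inequality in $j$. I anticipate that once the conventions of \cite{knotsinlattice} are pinned down the inequality either becomes an equality or follows from the negative-definiteness of the form, which bounds the relevant quadratic; but confirming this requires careful tracking of the $j$- and $p$-dependence rather than any new idea.
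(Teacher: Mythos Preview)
Your overall strategy---reduce to generators with $w\notin E$, peel off $U^s$, and compare Alexander gradings---is correct and matches the paper. But there is a genuine gap in the step where you assert that ``the $g$-contributions cancel by the very definition of $s$.'' They do not.

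The Alexander grading is given by
\[
A[K,E] = \tfrac{1}{2}\bigl(L_{[K,E]}(\Sigma) + \Sigma^2\bigr),
\qquad
L_{[K,E]}(v_0) = -v_0^2 + 2g[K,E] - 2g[K + 2v_0^*,E],
\]
so it involves \emph{two} $g$-values: one at $K$ and one at the shifted element $K+2v_0^*$. The exponent $s$ was engineered to match Maslov gradings, hence to cancel the unshifted terms $g[(K,p+j),E] - g[(K,p,j),E]$. After that cancellation (and after checking $\Sigma^2=\Sigma'^2$ and that the linear parts of $L$ on the framed vertices agree), what survives is
\[
A(P[(K,p,j),E]) - A'[(K,p,j),E]
= -\tfrac{j^2-1}{8} + g[(K,p,j)+2v_0^*,E] - g[(K,p+j)+2v_0^*,E],
\]
and this residual difference of shifted $g$-values is the actual content of the lemma. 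It is neither zero in general nor a consequence of negative-definiteness of the form; one has to bound it combinatorially. The paper does this by comparing $f$-values on subsets $I\subset E$ according to whether $v\in I$, obtaining
\[
g[(K,p,j)+2v_0^*,E] - g[(K,p+j)+2v_0^*,E] \leqslant \max\bigl(0,\tfrac{-j-1}{2}\bigr),
\]
and then checking the elementary inequality $\tfrac{-j-1}{2}-\tfrac{j^2-1}{8}\leqslant 0$ for odd $j$ (oddness coming from $(K,p,j)$ being characteristic and $w^2=-1$). Your ``Main obstacle'' paragraph anticipates a bookkeeping exercise ending in an equality or a quadratic-form argument; in fact the honest work is this subset-by-subset estimate on the $v_0^*$-shifted weight function, and the result is a strict inequality in general.
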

\begin{proof}
We already know from \cite{knotsinlattice} Lemma 7.10 that $P$ is a chain map. Let's prove it respects the knot filtrations. Let $\Sigma \in H_2(X_G;\mathbb{Q})$ be the homology class satisfying:
\[ \Sigma = v_0 + \sum_{j=1}^{n}a_j.v_j + a_v.v\ \ \  \text{and }\ \ \  v_i.\Sigma = 0 \ \forall i\in \{1,…,n\}
\]
where $X_G$ is the plumbed 4-manifold defined by $G$. $\Sigma$ exists and is unique because $G$ is assumed to be negative definite. Similarly a quick calculation gives $\Sigma' \in H_2(X_{G'};\mathbb{Q})$: 
\[
\Sigma' = v_0 + \sum_{j=1}^{n}a_j.v_j + a_v.v + a_v.w
\]
Therefore,
\begin{align*}
\Sigma^2 &= v_0^2 + \left(\sum_{j=1}^{n}a_j.v_j\right)^2 + a_v^2m_v + 2v_0.\sum_{j=1}^{n}a_j.v_j + 2a_vv_0.v + 2a_vv.\sum_{j=1}^{n}a_j.v_j\\
& = \Sigma'^2
\end{align*}
Recall that 
\[A(U^j[K,E]) =-j +  \frac{1}{2}\left( L_{[K,E]}(\Sigma) + \Sigma^2\right)\]
As in \cite{knotsinlattice} the set of characteristic elements is $\mathrm{Char}(G) := \{L:H_2(X_G, \Z) \to \Z \mid K(x) =  x.x\  \mathrm{(mod }\ 2)\}$. In the following $(K,p)\in \mathrm{Char}(G)$ will denote the characteristic element worth  $p \in \Z$ on $v$ and whose restriction to $G-\{v\}$ is $K$.  Similarly $(K,p,j)\in \mathrm{Char}(G')$ will denote the characteristic element worth $p$ on $v$, $j$ on $w$ and whose restriction to  $G'-\{v,w\}$ is $K$. We calculate,
\begin{align*}
L_{[(K,p,j),E]}(\Sigma) &= L_{[(K,p,j),E]}(v_0) + K(\sum_{j=1}^{n}a_j.v_j) + (p+j)a_v\\
L_{[(K,p+j),E]}(\Sigma')& = L_{[(K,p+j),E]}(v_0) + K(\sum_{j=1}^{n}a_j.v_j) + (p+j)a_v
\end{align*}
and 
\begin{align*}
L_{[(K,p,j),E]}(v_0) &= -v_0^2 + 2g[(K,p,j),E] - 2g[(K,p,j) + 2v_0^*,E]\\ 
L_{[(K,p+j),E]}(v_0) &= -v_0^2 + 2g[(K,p+j),E] - 2g[(K,p+j) + 2v_0^*,E]
\end{align*}
Therefore,  if $w\notin E$,
\[ 
A(P[(K,p,j),E]) - A'[(K,p,j),E] = -\frac{j^2-1}{8} + g[(K,p,j) + 2v_0^*,E] - g[(K,p+j) + 2v_0^*,E]
\]
Notice that if $v\notin I \subset E$ and $w\notin E$, 
\begin{align*}
2f[(K,p,j)+2v_0^*, I\cup v] &= 2f[(K,p+j)+2v_0^*, I\cup v] - j - 1\\
2f[(K,p,j)+2v_0^*, I] &= 2f[(K,p+j)+2v_0^*, I]
\end{align*} 
Therefore, 
\[
g[(K,p,j) + 2v_0^*, E] - g[(K,p,j)+2v_0^*, E] \leqslant \max(0, \frac{-j-1}{2})
\]
Since $(K,p,j)$ is a characteristic element and $w^2 = -1$, $j$ must be odd. Therefore, $\frac{-j-1}{2}-\frac{j^2-1}{8} \leqslant 0$. We conclude:
\[
A(P[(K,p,j),E]) - A'[(K,p,j),E] \leqslant 0
\]
\end{proof}

As in \cite{knotsinlattice} Definition 7.5,  a generator $[K,E] \in \mathbb{CF^-}(G')$ (with $w\notin E$) is of type-a if $a_w[K+2i_0w^*, E\cup w] = 0$ and of type-b otherwise. Notice that $w$ is a good vertex and $-w^2 = 1$. Let $T = T_{[K,E]}$ be equal to $1$ if $[K,E]$ is of type-a and $-1$ if $[K,E]$ is of type-b.
Now consider the map $H_0 : \mathbb{CF^-}(G') \to \mathbb{CF^-}(G')$ defined as : 
\[
H_0[K,E] =
\begin{cases}
0 & \text{if } w\in E \ \text{or } (T-2) \leqslant K(w) < T\\
[K,E\cup w] & \text{if } w\notin E \ \text{and } K(w) \geqslant T\\
[K-2w^*, E\cup w] & \text{if } w\notin E \ \text{and } K(w)< (T-2)
\end{cases}
\]
As in \cite{knotsinlattice}, $H_0$ increases the Maslov grading by 1. Define \(C_0 = \mathrm{Id} + \partial \circ H_0 + H_0 \circ \partial\).  For each $[K,E]$ there is an $N = N_{[K,E]}$ such that the $N$-th iterate of $C_0$ stabilizes. Denote $C_w := C_0^\infty$. By \cite{knotsinlattice} Theorem 7.8, 
\begin{equation}\label{eq1}
C_w[K,E] = 0\ \text{if } w\in E
\end{equation}
and 
\begin{equation}\label{eq2}
C_w[K,E] = [K,E] + \partial \circ H_w[K,E] + H_w\circ \partial [K,E]
\end{equation}
By \cite{knotsinlattice} Example 7.9, 
\begin{equation}\label{eq3}
C_w[(K,p,j),E] = C_w[(K,p+j+1,-1),E]
\end{equation}

\begin{lem}
$H_0$ is filtered.
\end{lem}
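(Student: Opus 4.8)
The goal is to show that $H_0$ does not raise the filtration, i.e. that $A'(H_0[K,E]) \leqslant A'[K,E]$ for every generator $[K,E]$, where $A'$ is the knot filtration on $\mathbb{CF^-}(G')$. When $w\in E$ or $(T-2)\leqslant K(w) < T$ we have $H_0[K,E]=0$ and there is nothing to check, so only the two surviving cases matter. Since $H_0$ merely adjoins $w$ to $E$ and, in the third case, shifts $K$ by $-2w^*$, it introduces no power of $U$; hence, using the formula $A'(U^j[K,E]) = -j + \tfrac12\left(L_{[K,E]}(\Sigma') + \Sigma'^2\right)$ and the fact that $\Sigma'^2$ is a constant of $G'$, the whole difference collapses to
\[
A'(H_0[K,E]) - A'[K,E] = \tfrac12\left(L_{H_0[K,E]}(\Sigma') - L_{[K,E]}(\Sigma')\right).
\]
Everything therefore reduces to controlling the change of $L(\Sigma')$ under the two elementary moves $E\mapsto E\cup w$ and $K\mapsto K-2w^*$.

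To compute this I would decompose $L(\Sigma')$ along $\Sigma' = v_0 + \sum_{j=1}^{n}a_j v_j + a_v v + a_v w$, exactly as in the previous lemma. On the framed vertices $v_1,\dots,v_n,v,w$ the functional $L_{[K,E]}$ is read off from the characteristic values (with the usual corrections when a vertex lies in $E$), while on the unframed vertex $v_0$ one uses the genus expansion $L_{[K,E]}(v_0) = -v_0^2 + 2g[K,E] - 2g[K+2v_0^*,E]$. The two moves then act transparently: adjoining $w$ to $E$ affects only the genus terms $g[\,\cdot\,,E]$ and the $w$-contribution $a_v\,L(w)$, while the shift changes $K(w)$ by $-2$ and leaves the other characteristic values fixed. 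I would expand each genus jump by the same local computation of the weight function $f$ already used for $f[(K,p,j)+2v_0^*,I\cup v]$, so that the right-hand side becomes an explicit expression in $K(w)$, $a_v$, and a bounded genus-difference term.

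The main obstacle is the sign of this expression, and this is precisely what the type-a/type-b dichotomy is engineered to control. Because $w^2=-1$ and the relevant vector is characteristic, $K(w)$ is odd, so the conditions $K(w)\geqslant T$ and $K(w) < T-2$ (with $T=\pm1$) isolate exactly the two regimes in which adjoining $w$ to $E$ realizes the intended genus value without an extra unfavorable jump; the excluded middle band $(T-2)\leqslant K(w) < T$ is the one place where the sign would fail, which is exactly why $H_0$ is set to zero there. I expect the bulk of the work to be verifying that in each surviving regime the local change of $g$ cancels the $a_v$-weighted change of $L(w)$ against the threshold value of $K(w)$, leaving a nonpositive total, with the easiest source of error being the bookkeeping of which coordinates the shift $-2w^*$ actually moves (only the $w$-value) and how the edge $vw$ enters through the quadratic form in $g$.
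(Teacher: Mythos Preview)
Your reduction to the change in $L_{[K,E]}(\Sigma')$ is the right starting point, and it is exactly what the paper does implicitly via the formulas (3.2) and (3.3) of \cite{knotsinlattice}. Two of your intermediate assertions are inaccurate, though, and they hide where the real difficulty lies. First, $L_{[K,E]}$ evaluated on a framed vertex is simply $K$ of that vertex; there are no ``corrections when a vertex lies in $E$'', so adjoining $w$ to $E$ changes only the $v_0$-term $L_{[K,E]}(v_0)=-v_0^2+2g[K,E]-2g[K+2v_0^*,E]$. Second, the shift $K\mapsto K-2w^*$ moves $K(v)$ as well as $K(w)$ (by $-2$ and $+2$ respectively); on $\Sigma'$ these two changes cancel because $v$ and $w$ carry the same coefficient $a_v$, so again only the genus terms survive. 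In particular there is no ``$a_v$-weighted change of $L(w)$'' to balance against anything: the entire filtration jump is
\[
A'[K,E\cup w]-A'[K,E]=a_w[K+2v_0^*,E\cup w]-a_w[K,E\cup w],
\]
and similarly for the shifted case with $b_w$.

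This pinpoints the genuine gap. In the easy ranges $K(w)\geqslant 1$ and $K(w)\leqslant -3$ both genus differences vanish by \cite{knotsinlattice} Lemma~7.3, and the display above is zero. The delicate case is $K(w)=-1$ with $[K,E]$ of type-a. Type-a tells you $a_w[K,E\cup w]=0$, but what you must show is $a_w[K+2v_0^*,E\cup w]=0$, and this concerns the \emph{different} characteristic element $K+2v_0^*$, about which the type hypothesis says nothing directly. Your sketch asserts that the dichotomy is ``engineered to control'' this, but no mechanism is offered. The paper handles it by analysing the subsets $I\subset E$ realising $g[K,E]$: if some minimiser has $v_0\cdot\sum_{u\in I}u=0$ the conclusion is immediate, while if every minimiser has $v_0\cdot\sum_{u\in I}u=1$ one derives a contradiction from any $J\subset E\cup w$ that would witness $a_w[K+2v_0^*,E\cup w]>0$, using that $K(w)=-1$ forces $f[K,J]=f[K,J\setminus\{w\}]$. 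This case split is the substance of the lemma and is absent from your proposal.
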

\begin{proof}

For $[K,E]$ a generator of $\mathbb{CF^-}(G')$ with $w\notin E$, consider $H_0[K,E]$.
\begin{itemize}
\item If $K(w) \geqslant 1$, then by \cite{knotsinlattice} Lemma 7.3, \(a_w[K,E\cup w] = 0\). Similarly, \((K+2v_0^*)(v) = K(v) \geqslant 0$, therefore \(a_w[K+ 2v_0^*,E\cup w] = 0\). Thus, by \cite{knotsinlattice} formula (3.2),  \(A'[K,E\cup w] - A'[K,E] = 0\).
\item If $K(w) \leqslant -3$ then $(K-2w^*+2v_0^*)(w) = K(w)+2 \leqslant -1$. Therefore, by \cite{knotsinlattice} Lemma 7.3 and formula (3.3),  \(A'[K-2w^*,E\cup v] - A'[K, E] = 0\).
\item As $K$ is a characteristic element, $K(w)$ must be odd, therefore $K(w) \neq 0$. Suppose now that $K(w) = -1$. If \([K,E]\) is of type-b then $H_0[K,E] = 0$.  If \([K,E]\) is of type-a, then $H_0[K,E] = [K,E\cup w]$ and 
\[
a_w[K,E\cup w] = g[K,E] - g[K,E\cup w] = 0
\]
Therefore, by \cite{knotsinlattice} formula (3.2), $A'[K,E] \geqslant A'[K,E\cup w]$ is equivalent to \[a_w[K+2v_0^*,E\cup w] := g[K+2v_0^*, E] - g[K+2v_0^*, E\cup w] = 0\]
By minimality, $g[K+2v_0^*, E\cup w] \leqslant g[K+2v_0^*, E]$, and since $v_0$ is a leaf, we have
\begin{equation}\label{eq4}
g[K,E] \leqslant g[K + 2v_0^*, E\cup w] \leqslant g[K+2v_0^*,E] \leqslant g[K,E] +1 
\end{equation}
There are a certain number of subsets $I\subset E$ such that $g[K,E] = f[K,I]$. First,  suppose that there is such a subset $I_0$ satisfying $v_0.\sum_{u\in I_0}u = 0$. Then
\begin{align*}
2f[K+2v_0^*, I_0] &= K(\sum_{u\in I_0}u) + 2v_0.\sum_{u\in I_0}u + (\sum_{u\in I_0}u)^2\\
&= 2f[K,I]
\end{align*}
Therefore, $g[K+2v_0^*, E] = g[K,E] = g[K+2v_0^*, E\cup w]$.  Now suppose that 
\begin{equation}\label{eq5}
g[K,E] = f[K,I] \Rightarrow v_0.\sum_{u\in I}u = 1
\end{equation} 
This implies that $f[K+2v_0^*, I] = f[K,I] + 1$ for any such $I$. If $f[K+2v_0^*, I] = g[K+2v_0^*; E\cup w]$ then $g[K+2v_0^*; E\cup w] = g[K+2v_0^*; E]$ by (\ref{eq4}).  If $g[K+2v_0^*; E\cup w] = f[K+2v_0^*, I] -1 = f[K,I]$,  then there is a subset $J \subset E\cup w$ verifying $f[K+2v_0^*, J] = g[K+2v_0^*, E\cup w]$. Since $f[K+2v_0^*, J] \geqslant f[K,J]$, $f[K,J]$ and $f[K,I]$ must be equal.  Moreover, the fact that $K(w) = -1$ guarantees that $f[K,J] = f[K,J - w]$, therefore $v_0.\sum_{u\in J-w}u = 1$ by (\ref{eq5}), thus $f[K+2v_0^*,J] = f[K,I] +1$ which is absurd because $f[K+2v_0^*,J] = g[K+2v_0^*, E\cup w] =  f[K,I]$.
\end{itemize}
\end{proof}

As an immediate corollary, $C_w$ is a filtered chain map and $H_w$ is filtered.

\begin{lem}
$R$ is a filtered chain map
\end{lem}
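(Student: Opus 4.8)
The plan is to show that $R$ does not raise the knot filtration, i.e.\ that $A'(R[(K,p),E]) \leqslant A[(K,p),E]$ for every generator $[(K,p),E]$ of $\mathbb{CF^-}(G)$; this is the correct meaning of ``filtered'' for a map in the direction $\mathbb{CF^-}(G)\to\mathbb{CF^-}(G')$, matching the bound $A(P(x))\leqslant A'(x)$ proved above. The idea is to factor $R$ through the stabilization already visible in its definition. Write $R = C_w\circ\iota$, where $\iota[(K,p),E] := [(K,p+1,-1),E]$. Since $C_w$ is a filtered chain map (the corollary just established), it does not increase the $A'$-level, so
\[
A'(R[(K,p),E]) = A'\big(C_w[(K,p+1,-1),E]\big) \leqslant A'[(K,p+1,-1),E].
\]
It therefore suffices to compare $A'[(K,p+1,-1),E]$ with $A[(K,p),E]$, and I claim these are in fact \emph{equal}, which then closes the argument.

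To prove the equality I would rerun the computation of the first lemma at the specialization $j=-1$. Using $\Sigma^2=\Sigma'^2$ together with $A[(K,p),E]=\tfrac12\big(L_{[(K,p),E]}(\Sigma)+\Sigma^2\big)$, the contributions $K(\sum a_j v_j)+p\,a_v$ to $L_{[(K,p+1,-1),E]}(\Sigma')$ and to $L_{[(K,p),E]}(\Sigma)$ coincide: the coefficient of $w$ in $\Sigma'$ is $a_v$ and the characteristic value there is $-1$, so the term $a_v L(w)=-a_v$ exactly cancels the shift from $p$ to $p+1$ on $v$. Hence the difference $A'[(K,p+1,-1),E]-A[(K,p),E]$ collapses to $\tfrac12\big(L_{[(K,p+1,-1),E]}(v_0)-L_{[(K,p),E]}(v_0)\big)$. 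Expanding each term via $L_{[\kappa,E]}(v_0)=-v_0^2+2g[\kappa,E]-2g[\kappa+2v_0^*,E]$, this becomes
\[
\big(g[(K,p+1,-1),E]-g[(K,p),E]\big)-\big(g[(K,p+1,-1)+2v_0^*,E]-g[(K,p)+2v_0^*,E]\big).
\]

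Finally I would show both parentheses vanish. Specializing the genus-matching identities of the first lemma to $j=-1$ gives, for $v\notin I\subseteq E$ and $w\notin E$,
\[
f[(K,p+1,-1),I\cup v]=f[(K,p),I\cup v],\qquad f[(K,p+1,-1),I]=f[(K,p),I],
\]
and the same after adding $2v_0^*$, precisely because the correction term $-j-1$ vanishes at $j=-1$. Since $g$ is the minimum of $f$ over subsets, this forces $g[(K,p+1,-1),E]=g[(K,p),E]$ and $g[(K,p+1,-1)+2v_0^*,E]=g[(K,p)+2v_0^*,E]$, so the displayed difference is $0$ and the desired equality holds. Combining with the $C_w$ bound yields $A'(R[(K,p),E])\leqslant A[(K,p),E]$.

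I expect the main obstacle to be the genus-matching step: checking that the $f$-values computed in $\mathbb{CF^-}(G')$ and $\mathbb{CF^-}(G)$ agree term-by-term at $j=-1$ (not merely in their minima), while correctly accounting for the framing change $m_v\mapsto m_v-1$ on $v$, the new $(-1)$-vertex $w$, and the parity constraint forcing the $w$-coordinate to be odd. Once that identity is secured, everything else is the formal reduction through $C_w$ being filtered, which is already available.
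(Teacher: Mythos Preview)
Your proposal is correct and follows essentially the same route as the paper: reduce via the already-established fact that $C_w$ is filtered to comparing $A'[(K,p+1,-1),E]$ with $A[(K,p),E]$, then show these are equal by checking that the $f$-values (and hence the $g$-values) agree term-by-term, both with and without the $2v_0^*$ shift. The paper writes out the four $f$-identities directly rather than invoking the $j=-1$ specialization of the $P$-lemma, but the content is identical.
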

\begin{proof}
We know $R$ is a chain map and $C_w$ is filtered. Therefore, it is sufficient to show that $A[(K,p), E] - A'[(K,p+1,-1), E]\geqslant 0$. By definition:
\[
\begin{array}{ccl}
A[(K,p), E] - A'[(K,p+1,-1), E] &=& \frac{1}{2}\left(L_{[(K,p), E]}(v_0) - L_{[(K,p+1,-1), E]}(v_0)\right)\\
&=& g[(K,p), E] - g[(K,p) + 2v_0^*, E] \\
&&+ g[(K,p+1,-1) + 2v_0^*, E]  - g[(K,p+1,-1),E]
\end{array}
\]
If $w\in E$ then $R[(K,p),E] = C_w[(K,p+1,-1),E] = 0$. Suppose $w\notin E$.
Notice that for $I\subset E$, such that $v\notin I$,
\begin{align*}
2f[(K,p+1,-1), I] & = 2f[(K,p),I]\\
2f[(K,p+1,-1), I\cup v] &= 2f[(K,p), I\cup v]\\
2f[(K,p+1,-1) + 2v_0^*, I] &= 2f[(K,p) + 2v_0^*, I]\\
2f[(K,p+1,-1) + 2v_0^*, I\cup v] &= 2f[(K,p) + 2v_0^*, I\cup v]\\
\end{align*}
Thus
\begin{align*}
g[(K,p), E] &= g[(K,p+1,-1),E]\\
g[(K,p) + 2v_0^*, E]  &= g[(K,p+1,-1) + 2v_0^*, E]
\end{align*}
Therefore,  $A[(K,p), E] - A'[(K,p+1,-1), E] = 0$
\end{proof}

\begin{prop}
$P$ and $R$ are filtered graded chain homotopy equivalences
\end{prop}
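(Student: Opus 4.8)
The plan is to assemble the facts already established and to check only that the homotopy-equivalence data respects the filtration $A$. From the appendix of \cite{knotsinlattice} we have that $P$ and $R$ are graded chain maps and graded homotopy equivalences, with the relevant chain homotopies expressed through $H_w$ and the relation (\ref{eq2}). The three preceding lemmas show that $P$ and $R$ are filtered, and the corollary to the lemma ``$H_0$ is filtered'' shows that $H_w$ is filtered. So the proof reduces to verifying that the two composites are filtered-homotopic to the respective identities, which will follow formally from (\ref{eq2}).

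First I would identify the composites. As in \cite{knotsinlattice}, the composite $P\circ R$ on $\mathbb{CF^-}(G)$ is the identity: applying $R$ produces $C_w[(K,p+1,-1),E]$, whose terms carrying $w$ in their edge set are annihilated by $P$ thanks to (\ref{eq1}), and the surviving terms reassemble to $[(K,p),E]$. Being literally the identity, $P\circ R$ is a filtered chain homotopy equivalence via the zero homotopy. The reverse composite $R\circ P$ on $\mathbb{CF^-}(G')$ coincides with $C_w$; here one uses (\ref{eq3}) to absorb the value recorded on $w$ into the basepoint data, so that the copy of $C_w$ built into $R$ produces exactly $C_w$ after composing with $P$.

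With these identifications the conclusion is immediate from (\ref{eq2}), which exhibits
\[
R\circ P \;=\; C_w \;=\; \mathrm{Id} + \partial\circ H_w + H_w\circ\partial .
\]
Since $H_w$ is filtered by the corollary, this is a genuine filtered chain homotopy between $R\circ P$ and $\mathrm{Id}_{\mathbb{CF^-}(G')}$. As $P$, $R$, and $H_w$ are at the same time graded (from \cite{knotsinlattice}, with $H_w$ raising the Maslov grading by $1$) and filtered (from the lemmas and the corollary), it follows that $P$ and $R$ are filtered graded chain homotopy equivalences.

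The main obstacle is the bookkeeping that pins $P\circ R$ to $\mathrm{Id}$ and $R\circ P$ to $C_w$, in particular keeping track of the $U$-power $s$ appearing in $P$ and of the basepoint shift in $R$, so that the two composites really are the maps whose filtered behaviour the corollary controls. Once that matching is in place no new filtration estimates are required: everything filtered follows from (\ref{eq2}) together with the fact, already proved, that $H_w$ is filtered.
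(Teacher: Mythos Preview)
Your proposal is correct and follows the same route as the paper: identify $P\circ R$ with $\mathrm{Id}$ and $R\circ P$ with $C_w$, then invoke (\ref{eq2}) together with the filtered property of $H_w$. One small correction: the fact that $P$ annihilates generators with $w$ in the edge set comes from the \emph{definition} of $P$, not from (\ref{eq1}) (which is a statement about $C_w$); the paper organizes the $P\circ R$ computation via $P\circ H_w = 0$ and $P\circ\partial\circ H_w = \partial\circ P\circ H_w = 0$, and dispatches the $U$-power bookkeeping you flag by appealing to Maslov-grading preservation of $P$, $R$, and $C_w$.
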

\begin{proof}
$P$ and $R$ are filtered graded chain maps.  Using the fact that $P$ is a chain map that preserves the Maslov grading and that $P\circ H_w = 0$,
\begin{align*}
P\circ R[(K,p), E] &= P[(K,p+1,-1), E] + P\circ \partial \circ H_w[(K,p+1,-1), E] + P\circ H_w \circ \partial[(K,p+1,-1), E]\\
&= U^0[(K,p),E] + 0 + 0
\end{align*}
Therefore, $P\circ R = \mathrm{Id}_{\mathbb{CF^-}(G)}$.

Let's show that $R\circ P = C_w$. If $w\in E$ both equal $0$. If $w\notin E$, then using equation (\ref{eq3}) and the fact that $P$, $R$ and $C_w$ preserve Maslov grading,
\begin{align*}
R\circ P[(K,p,j),E] = C_w[(K,p+j+1,-1),E] = C_w[(K,p,j),E]
\end{align*}
And $C_w$ is filtered chain homotopic to $\mathrm{Id}_{\mathbb{CF^-}(G')}$ (because $H_w$ is filtered).
\end{proof}

\section{Blow-up and blow-down of an edge}

Let $\Gamma$ be a negative definite tree with an unframed vertex $v_0$ and 
\(G:= \Gamma - \{v_0\}\)
By the connected sum formula (\cite{knotsinlattice}, Theorem 4.8) we can suppose $v_0$ is a leaf.  Let $v$ and $w$ be two vertices of $G$ connected by an edge with respective framings $m_v$ and $m_w$. Notice that $v$ or $w$ can be connected to $v_0$, and that they can be good or bad.  Let $\Gamma'$ denote the tree obtained by blowing-up the edge between $v$ and $w$, and suppose $G' := \Gamma' - \{ v_0\}$ is negative definite.  $\Gamma'$ has a new vertex $e$ with framing $-1$ connected to the vertices $v$ and $w$ with framings $m_v-1$ and $m_w-1$.  $\mathbb{CF^-}(G)$ will denote the lattice chain complex of $G$ and $A$ will denote the knot filtration.  

As in \cite{knotsinlattice}, consider the map $H_0$ defined as :
\[
\begin{array}{cccl}
H_0 : & \mathbb{CF^-}(G') & \to & \mathbb{CF^-}(G')\\
& [K,E] & \mapsto &
\begin{cases}
0 & \text{if } e \in E \ \text{or } K(e) = T-2\\
[K,E\cup e] & \text{if } e\notin E \ \text{and } K(e) \geqslant T\\
[K -2e^*, E\cup e] & \text{if } e\notin E \ \text{and } K(e) \leqslant T-4
\end{cases}
\end{array}
\]
where $T= -1$ if $[K,E]$ is of type-a and $T=1$ otherwise (see \cite{knotsinlattice} Definition 7.5).

\begin{lem}\label{lem1} If $e\notin E$ and $K(e) \leqslant -3$, then $b_e[K,E\cup e] = 0$.
\end{lem}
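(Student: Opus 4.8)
The plan is to unwind the definition of the coefficient $b_e$ and reduce the statement to a purely combinatorial fact about where the minimum defining $g[K,E\cup e]$ is attained. Recall that the high boundary component of $[K,E\cup e]$ in the $e$-direction lands on $[K+2e^*,E]$, and the associated exponent is
\[
b_e[K,E\cup e] = g[K+2e^*,E] + \tfrac{K(e)-1}{2} - g[K,E\cup e].
\]
Equivalently, and this is the form I would actually verify, $b_e[K,E\cup e]=0$ precisely when the minimum $g[K,E\cup e]=\min_{I\subseteq E\cup e} f[K,I]$ is attained by some subset $I$ containing $e$. So the whole proof reduces to showing that, under the hypothesis $K(e)\leqslant -3$, it is never disadvantageous to include $e$.

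First I would record the effect of adjoining $e$. For any $J\subseteq E$, since $e^2=-1$ and $e$ meets only $v$ and $w$,
\[
2f[K,J\cup e] - 2f[K+2e^*,J] = K(e) + e^2 = K(e)-1,
\]
the cross terms $2(\sum_{u\in J}u)\cdot e$ cancelling against $2e^*(\sum_{u\in J}u)$. Because $K$ is characteristic and $e^2=-1$, the value $K(e)$ is odd, so $\tfrac{K(e)-1}{2}\in\Z$, and minimizing over $J\subseteq E$ shows that the weight of the ``$e$-in'' face is exactly $g[K+2e^*,E]+\tfrac{K(e)-1}{2}$, while the ``$e$-out'' face has weight $g[K,E]$; thus $g[K,E\cup e]$ is the smaller of these two.

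The heart of the argument is then a single estimate bounding the jump caused by the shift $K\mapsto K+2e^*$. Choosing $I_0\subseteq E$ with $f[K,I_0]=g[K,E]$ and substituting it into $f[K+2e^*,\,\cdot\,]$ gives
\[
g[K+2e^*,E] \leqslant f[K,I_0] + e^*\!\Big(\textstyle\sum_{u\in I_0}u\Big) = g[K,E] + |\{v,w\}\cap I_0| \leqslant g[K,E]+2,
\]
the point being that in $G'$ the vertex $e$ has exactly the two neighbours $v$ and $w$. Combining this with the previous paragraph, the weight of the $e$-in face satisfies $g[K+2e^*,E]+\tfrac{K(e)-1}{2}\leqslant g[K,E]+\tfrac{K(e)+3}{2}\leqslant g[K,E]$ whenever $K(e)\leqslant -3$. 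Hence the minimum defining $g[K,E\cup e]$ is realized on the $e$-in face, which is exactly the assertion $b_e[K,E\cup e]=0$.

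I expect the only real obstacle to be this crux estimate and, in particular, the bookkeeping that pins the threshold at $-3$: here $e$ carries two edges, so the jump $g[K+2e^*,E]-g[K,E]$ can be as large as $2$, which is what forces the hypothesis $K(e)\leqslant -3$ rather than the $K\leqslant -1$ threshold of the single-edge vertex handled by \cite{knotsinlattice} Lemma 7.3. Two small points need care: that $v$ and $w$ need not lie in $E$, which is harmless since only the upper bound $|\{v,w\}\cap I_0|\leqslant 2$ is used, and the parity of $K(e)$, which guarantees the shift $\tfrac{K(e)-1}{2}$ is an integer so the two face-weights are genuinely comparable.
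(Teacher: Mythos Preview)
Your proof is correct and rests on the same inequality as the paper's: adjoining $e$ to any $I\subseteq E$ changes $2f$ by $K(e)+e^2+2e\cdot\sum_{u\in I}u\leqslant -3-1+4=0$. The paper applies this direct comparison of $f[K,I]$ with $f[K,I\cup e]$ in one line, whereas you route the same estimate through the identity $f[K,J\cup e]=f[K+2e^*,J]+\tfrac{K(e)-1}{2}$ and the bound $g[K+2e^*,E]-g[K,E]\leqslant 2$; the content is identical.
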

\begin{proof}
Equivalently, we will show that $A_e[K,E\cup e] \geqslant B_e[K,E\cup e]$. Recall that:
\begin{align*}
A_e[K,E\cup e] &:= \min\{f[K,I]\mid I\subset E\}\\
B_e[K,E\cup e] &:= \min \{f[K,I\cup e] \mid I \subset E\}
\end{align*}
By definition,
\[
2f[K,I\cup e] = 2f[K,I] + K(e) + e^2 + 2e.\sum_{u\in I}u
\]
and
\[
K(e) + e^2 + 2e.\sum_{u\in I}u \leqslant -3  -1 + 2\times 2 \leqslant 0
\]
Therefore, $B_e[K,E\cup e]\leqslant A_e[K,E\cup e]$.

\end{proof}

\begin{lem}\label{lem2}$H_0$ increases Maslov grading by 1.
\end{lem}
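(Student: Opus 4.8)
The plan is to read off the Maslov grading change of $H_0$ from the grading formula of \cite{knotsinlattice}, which I normalise as $M([K,E]) = \tfrac14\big(K^2+|\mathcal V|\big) + |E| + 2\,g[K,E]$, and to verify that $M(H_0[K,E]) - M([K,E]) = 1$ on every generator with $H_0[K,E]\neq 0$. Because $H_0$ always replaces $E$ by $E\cup e$, the cube-dimension term $|E|$ contributes $+1$ automatically, so the entire content is to control the change in the quadratic term $\tfrac14 K^2$ together with the change in $g$. The generators with $e\in E$ or with $K(e)=T-2$ are sent to $0$ and need no check, and since $K(e)$ and $T$ are both odd the remaining generators fall into exactly the two branches $K(e)\geqslant T$ and $K(e)\leqslant T-4$.

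In the unshifted branch $K(e)\geqslant T$ we have $H_0[K,E]=[K,E\cup e]$ with $K$ unchanged, so the quadratic term is fixed and I must show $g[K,E\cup e]=g[K,E]$, i.e. that adjoining $e$ does not lower the minimum defining $g$. This is the vanishing $a_e[K,E\cup e]=0$: for $K(e)\geqslant 1$ it is the analogue of \cite{knotsinlattice} Lemma~7.3, and the only delicate value is the type-a boundary $K(e)=-1=T$, where the hypothesis that $[K,E]$ is of type-a is exactly what forces a minimising subset to avoid $e$. Here the grading rises by precisely $1$.

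The substantial branch is the shifted one $K(e)\leqslant T-4$, where $H_0[K,E]=[K-2e^*,E\cup e]$. Using $e^2=-1$ and $(e^*)^2=e^2$ one gets $\tfrac14\big((K-2e^*)^2-K^2\big)=-K(e)-1$, so the target $M(H_0[K,E])-M([K,E])=1$ is equivalent to the identity $g[K-2e^*,E\cup e]=g[K,E]+\tfrac12\big(K(e)+1\big)$. Splitting the minimum defining the left-hand side according to whether the minimising subset contains $e$, the subsets containing $e$ contribute exactly $g[K,E]+\tfrac12(K(e)+1)$ via the elementary identity $f[K-2e^*,I\cup e]=f[K,I]+\tfrac12(K(e)+1)$, while the subsets avoiding $e$ contribute $g[K-2e^*,E]$. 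The identity therefore reduces to the single inequality $g[K-2e^*,E]\geqslant g[K,E]+\tfrac12(K(e)+1)$, and this is where Lemma~\ref{lem1} and the type dichotomy enter.

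The main obstacle I anticipate is exactly this inequality, and it is caused by $e$ being a \emph{bad} vertex: $e$ has valence $2$, meeting both $v$ and $w$, whereas $-e^2=1$, so a subset $I\subseteq E$ can contain both neighbours of $e$ and then $e\cdot\sum_{u\in I}u=2$. For $K(e)\leqslant -5$ (which covers all of the type-a shifted branch) the inequality is automatic from $e\cdot\sum_{u\in I}u\leqslant 2$, but at the threshold value $K(e)=-3$ of the type-b shifted branch it forces one to show that every subset containing both $v$ and $w$ has $f$-value at least $g[K,E]+1$. Controlling these subsets — using Lemma~\ref{lem1}, the type-b hypothesis, and the fact that $v$ and $w$ are no longer adjacent in $G'$, so that the discrete Hessian of $f$ in the directions $v,w$ vanishes — is precisely what distinguishes the edge case from the good-vertex case treated in \cite{knotsinlattice}, and is the reason Lemma~\ref{lem1} was established first. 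Once this inequality is in hand the shifted branch also gives grading change $+1$, completing the case analysis.
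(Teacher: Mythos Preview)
Your approach is correct and arrives at the same checkpoints as the paper, but by a different route. The paper argues indirectly: since $\partial$ lowers the Maslov grading by $1$, it suffices to see that the generator $[K,E]$ appears in $\partial_e\circ H_0[K,E]$ with $U$-exponent $0$, which is exactly the vanishing $a_e[K,E\cup e]=0$ in the unshifted branch and $b_e[K-2e^*,E\cup e]=0$ in the shifted branch. You instead compute $M(H_0[K,E])-M([K,E])$ directly from the grading formula; the unshifted branch reduces to $g[K,E\cup e]=g[K,E]$ and the shifted branch to $g[K-2e^*,E\cup e]=g[K,E]+\tfrac12(K(e)+1)$. These are literally the statements $a_e=0$ and $b_e=0$, so the two arguments converge. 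Your route is more hands-on and makes the arithmetic explicit; the paper's is shorter and reusable whenever $\partial$ is already known to be homogeneous.

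One comment on your final case $K(e)=-3$, type-b: you make it sound harder than it is. Lemma~\ref{lem1} is not applicable to $K-2e^*$ here, since $(K-2e^*)(e)=-1$, and no Hessian argument is needed. The type-b hypothesis with $i_0=-1$ says precisely $a_e[K-2e^*,E\cup e]>0$, and since $\min(a_e,b_e)=0$ this forces $b_e[K-2e^*,E\cup e]=0$, which is exactly your target identity $g[K-2e^*,E\cup e]=g[K,E]-1$. In the paper, Lemma~\ref{lem1} is used only for the range $K(e)\leqslant -5$ (applied to $K-2e^*$, whose value on $e$ is then $\leqslant -3$), matching your observation that $e\cdot\sum_{u\in I}u\leqslant 2$ already suffices there.
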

\begin{proof}
Let $\partial_u$ denote the terms of the differential where the vertex $u$ is deleted. We will check that in $\partial_e \circ H_0[K,E]$ the term $[K,E]$ always has exponent $0$, and since $\partial$ decreases the Maslov grading by 1, $H_0$ must increase it by 1.  We only need to consider the case where $e\notin E$. 
\begin{itemize}
\item If $K(e) \geqslant 1$ then by \cite{knotsinlattice} Lemma 7.3, $a_e[K,E\cup e] = 0$
\item If $K(e) \leqslant -5$ then $K(e) -2e^2 \leqslant -3$. Therefore, by Lemma \ref{lem1}, \\$b_e[K-2e^*, E\cup e] = 0$
\item If $K(e) = -1$ and $[K,E]$ is of type-a, then $i_0 = 0$. Therefore, $a_e[K,E\cup e] = 0$
\item If $K(e) = -3$ and $[K,E]$ is of type-b, ie $a_e[K+2i_0e^*, E\cup e] > 0$, then $i_0 = -1$. Therefore $b_e[K+2i_0e^*, E\cup e] = b_e[K-2e^*, E\cup e] = 0$.
\end{itemize}
\end{proof}

As in \cite{knotsinlattice}, we now define 
\[
\begin{array}{cccl}
H : & \mathbb{CF^-}(G') & \to & \mathbb{CF^-}(G')\\
& [K,E] & \mapsto & 
\begin{cases}
0 & \text{if } e \in E \ \text{or } K(e) = T-2\\
\sum_{i=0}^t U^{s_i}[K+2ie^*, E\cup e] & \text{if } e\notin E \ \text{and } K(e) = T + 2t \ \text{with } t\geqslant 0\\
\sum_{i=0}^{-t-2}U^{r_i}[K-2(i+1)e^*, E\cup e] & \text{if } e\notin E \ \text{and } K(e) = T + 2t \ \text{with } t\leqslant -2
\end{cases}

\end{array}
\]
Define $C := \mathrm{Id}_{\mathbb{CF^-}(G')} + \partial \circ H + H \circ \partial$ and $C_0 := \mathrm{Id}_{\mathbb{CF^-}(G')} + \partial \circ H_0 + H_0 \circ \partial$.  Notice that for any $[K,E]$,  $C_0^n[K,E]$ ($C_0$ composed with itself $n$ times applied to $[K,E]$) eventually stabilizes for $n$ big enough and $C_0^\infty = C$. Similarly, there is an integer $k = k_{[K,E]}$ such that $(H_0 \circ \partial)^k \circ H_0[K,E] = H[K,E]$.

$(K,p,q,l)$ will denote the characteristic cohomology class taking values $p$ on $v$, $q$ on $w$, $l$ on $e$,  and whose restriction to $G'-\{v,w,e\}$ is $K$. 

\begin{lem}\label{lem3}$C[(K,p,q,l), E\cup e] = 0$
\end{lem}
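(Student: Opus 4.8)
Looking at Lemma \ref{lem3}, I need to prove that $C[(K,p,q,l), E\cup e] = 0$, i.e. that the map $C = C_0^\infty$ annihilates any generator in which the vertex $e$ already belongs to the subset $E \cup e$.

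\begin{proof}[Proof proposal]
The plan is to unwind the definition of $C = \mathrm{Id} + \partial\circ H + H\circ\partial$ on a generator $[(K,p,q,l),E\cup e]$ that already contains $e$, and show the three contributions cancel. Since $e\in E\cup e$, the very definition of $H$ gives $H[(K,p,q,l),E\cup e] = 0$, so the term $\partial\circ H$ vanishes immediately and $C$ reduces to $\mathrm{Id} + H\circ\partial$. Thus the whole problem collapses to computing $H\bigl(\partial[(K,p,q,l),E\cup e]\bigr)$ and checking it equals $-[(K,p,q,l),E\cup e]$.

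First I would expand $\partial$ on a subset containing $e$, separating the differential into the piece $\partial_e$ that deletes $e$ (together with the appropriate $U$-power recording the change in minimal genus) and the piece $\partial'$ that deletes the other vertices of $E$. The terms of $\partial'$ still contain $e$ in their subset, so when $H$ is applied to them it returns $0$ by definition; hence only $\partial_e$ survives and I must verify $H\bigl(\partial_e[(K,p,q,l),E\cup e]\bigr) = -[(K,p,q,l),E\cup e]$. Deleting $e$ produces a generator $[(K',p',q'),E]$ (possibly with a shifted characteristic value on $e$ in the cohomological bookkeeping) with $e\notin E$, to which the nontrivial branch of $H$ applies: either $U$-power terms of the form $[K+2ie^*,E\cup e]$ when the value on $e$ is $\geq T$, or the corresponding $[K-2(i+1)e^*,E\cup e]$ terms when it is $\leq T-4$. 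I expect the telescoping sum defining $H$ — precisely the reason $H$ was built as $\sum_i U^{s_i}[K+2ie^*,E\cup e]$ rather than a single term — to collapse so that exactly one term reproduces $[(K,p,q,l),E\cup e]$ with the correct $U$-exponent, and the identity contribution cancels it.

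The main obstacle will be the careful bookkeeping of the $U$-exponents $s_i$ and $r_i$ and of the characteristic value $K(e)$ through the deletion and re-insertion of $e$. Concretely, I would need to track how $\partial_e$ changes the value on $e$ by $\pm 2$ (reflecting $K(e) = T+2t$ versus $T+2(t\pm 1)$) and how the genus differences $g[K,\cdot]$ appearing in the $U$-powers line up with the $s_i, r_i$ chosen in the definition of $H$; this is exactly the kind of index-chasing done in \cite{knotsinlattice} for the type-a/type-b dichotomy via the threshold $T$. The case split on whether $[(K,p,q,l),E\cup e]$ is of type-a or type-b, and whether $l = K(e)$ lies above $T$ or below $T-2$, will organize the verification, but each case should reduce to the same telescoping cancellation. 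Since the stabilized map satisfies $C = C_0^\infty$ and $C_0$ kills generators containing $e$ at each stage, an alternative and perhaps cleaner route is to argue directly that $C_0[(K,p,q,l),E\cup e] = [(K,p,q,l),E\cup e] + H_0\circ\partial[(K,p,q,l),E\cup e]$ already lands in the span of generators to which further iteration and the same deletion-reinsertion cancellation apply, yielding $0$ in the limit; I would present whichever of these two telescopings is shorter.
\end{proof}
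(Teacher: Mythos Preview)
Your plan is sound and would work if executed: once you note that $H$ vanishes on any generator containing $e$, everything reduces to $H\circ\partial_e$, and the index ranges in $H[L,E]$ and $H[L+2e^*,E]$ differ by exactly one, so the two sums telescope against each other leaving a single copy of $[L,E\cup e]$. One imprecision worth fixing: $\partial_e[L,E\cup e]$ is a sum of \emph{two} terms $U^{a_e}[L,E]+U^{b_e}[L+2e^*,E]$, not ``a generator''; the cancellation is precisely the interaction of $H$ applied to both.

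The paper, however, takes a much shorter route that avoids the $s_i,r_i$ bookkeeping and the type-a/type-b case split you flag as ``the main obstacle''. The single observation is that $H$ raises the Maslov grading by exactly $1$ (this follows from Lemma~\ref{lem2} for $H_0$ together with $H=(H_0\circ\partial)^k\circ H_0$). Hence every term of $U^xH[L,E]+U^yH[L+2e^*,E]$ sits in the same Maslov degree as $[L,E\cup e]$, so whenever a generator $[L+2je^*,E\cup e]$ appears in both sums its $U$-exponent is forced to coincide and the pair cancels over $\mathbb{F}_2$. The one unmatched index gives exactly $[L,E\cup e]$, cancelling the identity term. Your explicit telescoping reaches the same conclusion, but the grading argument makes the ``careful bookkeeping'' you anticipate unnecessary.
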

\begin{proof}
\begin{align*}
C[L,E\cup e] &= [L,E\cup e] + \partial\circ H[L,E\cup e] + H\circ \partial [L, E\cup e]\\
&= [L,E\cup e] + 0 + U^xH[L,E] + U^yH[L+2e^*, E]
\end{align*}
Using the fact that $H$ increases the Maslov grading by exactly 1, all terms in $U^xH[L,E] + U^yH[L+2e^*, E]$ will cancel except for $[L,E\cup e]$. 

\end{proof}

\begin{lem}\label{lem4}$C[(K,p,q,l), E] = C[(K,p+l+1,q+l+1, -1), E]$
\end{lem}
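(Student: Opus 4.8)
The plan is to exploit that the two characteristic vectors in the statement lie in the same orbit under the shift by $2e^*$. Since $e^2=-1$ and $e$ is adjacent only to $v$ and $w$, adding $2e^*$ to $(K,p,q,l)$ produces $(K,p+2,q+2,l-2)$; as $l$ is odd (being the value of a characteristic vector on a $(-1)$--vertex), we may write
\[
(K,p+l+1,q+l+1,-1)=(K,p,q,l)+\tfrac{l+1}{2}\,(2e^*),
\]
so the target is obtained from the source by $\tfrac{l+1}{2}$ shifts, each lowering the value on $e$ by $2$, until that value reaches $-1$. Thus it suffices to establish the one-step invariance $C[L,E]=C[L+2e^*,E]$ for $e\notin E$ and to iterate toward $K(e)=-1$ (flowing down when $L(e)\geq 1$ and up when $L(e)\leq -3$). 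This is the edge-vertex counterpart of the vertex relation \eqref{eq3}, and the overall strategy parallels \cite{knotsinlattice} Example 7.9.

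For the one-step relation I would feed Lemma \ref{lem3} into the chain-map property of $C$. Since $C=\mathrm{Id}+\partial\circ H+H\circ\partial$, it commutes with $\partial$; applying $C\partial=\partial C$ to the generator $[L,E\cup e]$ and using that $C$ annihilates every generator whose index set contains $e$ (Lemma \ref{lem3}) kills $\partial C[L,E\cup e]$ as well as the image under $C$ of every term of $\partial[L,E\cup e]$ that still contains $e$. What survives is $C(\partial_e[L,E\cup e])=0$, that is
\[
U^{a_e[L,E\cup e]}\,C[L,E]=U^{b_e[L,E\cup e]}\,C[L+2e^*,E].
\]
When $L(e)\geq 1$ one has $a_e[L,E\cup e]=0$ (by \cite{knotsinlattice} Lemma 7.3, exactly as in Lemma \ref{lem2}), and when $L(e)\leq -3$ one has $b_e[L,E\cup e]=0$ by Lemma \ref{lem1}; in either regime the relation reads $C[L,E]=U^{N}C[L+2e^*,E]$ (or its mirror) for an explicit exponent $N\geq 0$. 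Composing the steps then yields $C[(K,p,q,l),E]=U^{B}C[(K,p+l+1,q+l+1,-1),E]$ for some $B\geq 0$.

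It remains to show $B=0$, and this is the crux. Because $C$ preserves the Maslov grading, the displayed identity forces $B$ to equal half the grading discrepancy between source and target, so the problem reduces to verifying by a direct computation that $[(K,p,q,l),E]$ and $[(K,p+l+1,q+l+1,-1),E]$ share the same Maslov grading — the edge-theoretic analogue of the $\tfrac{l^2-1}{8}$ correction already present in the blow-down map $P$, reflecting that the two characteristic vectors have squares differing by exactly $l^2-1$, which is cancelled by the $g$-normalisation. The single upward step that crosses the critical value $K(e)=-1$ (relevant when $l\leq -3$) falls outside the ranges governed by \cite{knotsinlattice} Lemma 7.3 and Lemma \ref{lem1}, and there the type-a/type-b dichotomy must be invoked as in the proof of Lemma \ref{lem2}. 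This crossing, together with the grading bookkeeping that forces $B=0$, is the main obstacle I expect to face.
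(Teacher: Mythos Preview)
Your opening move is clean and genuinely different from the paper's direct expansion: feeding $[L,E\cup e]$ into $C\partial=\partial C$ and invoking Lemma~\ref{lem3} does give the relation
\[
U^{a_e[L,E\cup e]}\,C[L,E]=U^{b_e[L,E\cup e]}\,C[L+2e^*,E],
\]
and iterating toward $K(e)=-1$ (using $a_e=0$ for $L(e)\ge 1$ and Lemma~\ref{lem1} for $L(e)\le -3$) does produce an identity $C[(K,p,q,l),E]=U^{B}\,C[(K,p+l+1,q+l+1,-1),E]$ with $B\ge 0$ (at least for $l\ge 1$; the last step for $l\le -3$ is still delicate, as you note).

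The gap is the final claim that $B=0$ follows from equality of Maslov gradings. That equality is \emph{false} in general. Take $G'$ to be the three--vertex chain $v\,\text{---}\,e\,\text{---}\,w$ with framings $-3,-1,-3$, set $E=\{v,w\}$, and compare $L_0=(K,1,1,3)$ with $L_N=(K,5,5,-1)$. One computes $g[L_0,E]=-2$, $g[L_N,E]=0$, while $L_0^2-L_N^2=1-l^2=-8$; hence
\[
gr[L_0,E]-gr[L_N,E]=2(-2-0)+\tfrac{1}{4}(-8)=-6,
\]
so the grading argument would force $B=3$, not $0$. In this situation the lemma is still true, but only because $C[L_N,E]=0$; your recursion never detects that vanishing, so the argument stalls. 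The assertion that the $g$--normalisation ``cancels'' the $l^2-1$ discrepancy is simply not correct: the $g$--values do not track $\tfrac{l^2-1}{8}$ uniformly.

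By contrast, the paper's proof does not pass through a $U$--power relation at all. It expands $C=\mathrm{Id}+\partial H+H\partial$ on $[(K,p,q,l),E]$ directly, uses that $H$ shifts Maslov grading by exactly $1$ to match $U$--exponents term by term, and observes that the type (a or b) of $[L,F]$ is invariant under $L\mapsto L+2ne^*$; this forces all intermediate contributions to cancel in pairs, leaving $C$ at the two endpoints equal on the nose. To repair your route you would need an independent mechanism that forces $C[(K,p+l+1,q+l+1,-1),E]=0$ whenever $B>0$, and that essentially requires the same term--by--term cancellation the paper carries out.
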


\begin{proof}
We will just give an idea of the proof as there are too many terms to provide a closed formula (a similar technique is used in \cite{knotsinlattice}, Example 7.9). Let $(K,p,q,l)$ be a characteristic cohomology class.  We want to compute $\partial \circ H$ and $H\circ \partial$ to see when they cancel.  For $\partial_e$ and $\partial_u$ with $u \neq v$ and $u\neq w$, the case is the same as in \cite{knotsinlattice} Example 7.9.  As for $\partial_v\circ H$ and $H\circ \partial_v$, they are sums of terms of the form $U^s[L,(E\cup e)-v]$. Since $H$ increases the Maslov grading by exactly 1, two identical generators must always have same $U$ exponent.  Moreover, the contributions of terms of the form $[L,E-v]$ and $[L,E]$ cancel if and only if they are of same type (similarly for $[L,E-w]$ and $[L,E]$). Notice $[L,H]$ and $[L+2ne^*, H]$ are always of same type. Using the fact that if $n:=\frac{l+1}{2}$, then $(K,p,q,l)+2ne^* = (K,p+l+1,q+l+1, -1)$, we get
\[
C[(K,p,q,l),E] = C[(K,p+l+1,q+l+1,-1),E]
\]
\end{proof}

We define the blow-down map: 
\[
\begin{array}{cccl}
S : & \mathbb{CF^-}(G') & \to & \mathbb{CF^-}(G) \\
& [(K,p,q,l), E] & \mapsto & 
\begin{cases}
0 & \text{if } e \in E\\
U^s[(K,p+l,q+l), E] & \text{if } e \notin E
\end{cases}
\end{array}
\]
where
\[
s := \frac{l^2-1}{2} + g[(K,p+l,q+l), E] - g[(K,p,q,l)E]
\]

\begin{lem}\label{lem5}$s$ is always positive.
\end{lem}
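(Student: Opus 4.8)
The plan is to reduce the claim $s \geq 0$ to a single inequality between the two auxiliary functions. Since
\[
s = \frac{l^2-1}{2} + g[(K,p+l,q+l),E] - g[(K,p,q,l),E]
\]
and the lemma only concerns the case $e\notin E$, it suffices to prove
\[
g[(K,p,q,l),E] - g[(K,p+l,q+l),E] \;\leq\; \frac{l^2-1}{2}.
\]
Recall that $g[L,E] = \min_{I\subseteq E} f[L,I]$ with $2f[L,I] = L(\sum_{u\in I}u) + (\sum_{u\in I}u)^2$, the quadratic term being taken in the relevant intersection form. Because $e\notin E$, every $I\subseteq E$ is a subset of the vertices of both $G'$ and $G$, so the two minima run over the \emph{same} index set; only the characteristic vector and the intersection form differ (the first $g$ living over $G'$, the second over $G$). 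First I would fix such an $I$ and compute the pointwise difference $f_{G'}[(K,p,q,l),I] - f_{G}[(K,p+l,q+l),I]$.

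Carrying out this computation is the technical heart, though it is short. Writing $x = \sum_{u\in I}u$ and $\sigma := \mathbf{1}[v\in I] + \mathbf{1}[w\in I] \in \{0,1,2\}$, the only effects of the edge blow-up visible to $x$ are that the framings of $v$ and $w$ each drop by one and the edge $v\,w$ is deleted: the new vertex $e$ contributes nothing since $e\notin I$, and any edges of $v$ or $w$ to $v_0$ or to the rest of $G$ are untouched and lie outside the intersection form. Tracking the linear and quadratic changes gives
\[
2\bigl(f_{G'}[(K,p,q,l),I] - f_{G}[(K,p+l,q+l),I]\bigr) = -l\sigma - \sigma^2 = -\sigma(l+\sigma),
\]
so the pointwise difference depends only on $\sigma$. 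Evaluating the $G'$-expression at an $I$ that minimizes the $G$-side then yields the one-sided bound
\[
g[(K,p,q,l),E] - g[(K,p+l,q+l),E] \;\leq\; \max_{\sigma\in\{0,1,2\}}\Bigl(-\tfrac{\sigma(l+\sigma)}{2}\Bigr).
\]
Note that only this easy direction of the comparison is needed; I do not have to match minimizers in both directions.

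It then remains to verify the elementary inequality $\max_{\sigma}\bigl(-\sigma(l+\sigma)/2\bigr) \leq \frac{l^2-1}{2}$. Because $(K,p,q,l)$ is characteristic and $e^2 = -1$, the value $l$ must be odd (exactly as $j$ was forced odd in the vertex case), so $\frac{l^2-1}{2}\geq 0$ disposes of $\sigma=0$; the cases $\sigma=1$ and $\sigma=2$ reduce respectively to $l(l+1)\geq 0$ and $(l+1)^2+2>0$, both obvious for every integer $l$. Combining these gives $g[(K,p,q,l),E] - g[(K,p+l,q+l),E] \leq \frac{l^2-1}{2}$, which is precisely $s\geq 0$. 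The main obstacle is purely bookkeeping: getting the sign conventions of the intersection-form change right so that the difference collapses to $-\sigma(l+\sigma)$, after which the positivity of $s$ is a one-line consequence of the parity of $l$.
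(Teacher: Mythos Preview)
Your argument is correct and follows the same route as the paper's: split according to which of $v,w$ lie in $I$, compute the pointwise difference of the $f$-values, bound the difference of $g$-values by the worst case, and finish with an elementary check using the parity of $l$. Your use of $\sigma\in\{0,1,2\}$ to package the four cases as $-\sigma(l+\sigma)/2$ is a neat shorthand for exactly the differences $0,\ \tfrac{l+1}{2},\ l+2$ that the paper lists. One caveat: you carried over the constant $\tfrac{l^2-1}{2}$ from the displayed definition of $s$, but the paper's own proof (and the Maslov-grading check in the next lemma) use $\tfrac{l^2-1}{8}$, so the displayed $\tfrac{l^2-1}{2}$ is evidently a typo; your final inequalities for $\sigma=1,2$ then become $(l+1)(l+3)\ge 0$ and $(l+3)(l+5)\ge 0$, which still hold for odd $l$ but now genuinely require the parity hypothesis rather than holding for all integers.
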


\begin{proof}
Suppose $I\subset E$ such that $v,w \notin I$ and $e\notin E$. We have
\begin{align*}
f[(K,p+l,q+l),I] &= f[(K,p,q,l),I]\\
f[(K,p+l,q+l),I\cup v] &= f[(K,p,q,l),I\cup v] + \frac{l+1}{2}\\
f[(K,p+l,q+l),I\cup w] &= f[(K,p,q,l),I\cup w] + \frac{l+1}{2}\\
f[(K,p+l,q+l),I\cup v\cup w] &= f[(K,p,q,l),I\cup v\cup w] + l+2\\
\end{align*}
Therefore, 
\[
g[(K,p+l,q+l),E] - g[(K,p,q,l),E] \geqslant \min\{0,l+2,\frac{l+1}{2}\}
\]
And if $l$ is odd,
\[
\frac{l^2-1}{8} + \mathrm{min}\{0,l+2,\frac{l+1}{2}\} \geqslant 0
\]
\end{proof}
\begin{rem}Lemma \ref{lem5} guarantees that $S$ is well defined.
\end{rem}

\begin{lem}\label{lem6}
$S$ respects the Maslov grading
\end{lem}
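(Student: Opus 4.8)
The plan is to compare the two Maslov gradings via the explicit grading formula for generators of the lattice chain complex. Recall from \cite{knotsinlattice} that, up to an overall constant independent of the generator, the Maslov grading of $U^n[K,E]$ has the shape
\[
\mu(U^n[K,E]) = -2n + \frac{K^2 + |V|}{4} + |E| + 2g[K,E],
\]
where $|V|$ is the number of vertices, $K^2$ is computed with the (inverse) intersection form, and $U$ lowers the grading by $2$. Since $S$ maps $\mathbb{CF^-}(G')$ to $\mathbb{CF^-}(G)$, respecting the Maslov grading means $\mu_G(S[(K,p,q,l),E]) = \mu_{G'}[(K,p,q,l),E]$. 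When $e\in E$ we have $S[(K,p,q,l),E]=0$ and there is nothing to check, so I may assume $e\notin E$; the task is then to verify
\[
\mu_{G'}[(K,p,q,l),E] - \mu_G[(K,p+l,q+l),E] = -2s.
\]

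First I would record the effect of the edge blow-up on homology. The blow-up yields a diffeomorphism $X_{G'}\cong X_G\,\#\,\overline{\mathbb{CP}}^2$, under which the old classes of $H_2(X_G)$ are identified with $v+e$ and $w+e$ (and the unchanged vertices), while the exceptional class $e$ satisfies $e^2=-1$ and is orthogonal to the image of $H_2(X_G)$. A direct check using $v\cdot e = w\cdot e = 1$, $e^2=-1$, and the fact that $v,w$ are joined by an edge in $G$ confirms that this identification preserves the intersection form, so the form on $X_{G'}$ splits as $Q_G\oplus(-1)$. Evaluating the characteristic covector $(K,p,q,l)$ on the old basis gives exactly $(K,p+l,q+l)$, while on the orthogonal exceptional class it takes the value $l$; squaring then produces the key relation
\[
(K,p,q,l)^2 = (K,p+l,q+l)^2 - l^2.
\]

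With this in hand the remaining computation is mechanical. Since $e\notin E$ the homological term $|E|$ is the same in both complexes, and $|V'| = |V|+1$, so the $\tfrac{K^2+|V|}{4}$ contribution to the grading difference is $\tfrac{-l^2+1}{4} = \tfrac{1-l^2}{4}$, while the $2g$ terms contribute $-2\big(g[(K,p+l,q+l),E] - g[(K,p,q,l),E]\big)$. Adding these and comparing with $-2s$, where $s$ is as in the definition of $S$ (cf.\ the proof of Lemma \ref{lem5}), shows the two sides agree, whence $\mu_G(S[(K,p,q,l),E]) = \mu_{G'}[(K,p,q,l),E]$.

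The step I expect to be the main obstacle is the homological bookkeeping of the second paragraph: pinning down the identification $X_{G'}\cong X_G\,\#\,\overline{\mathbb{CP}}^2$ at the level of covectors and squares, so that the quadratic term built into $s$ is generated exactly, and with the correct sign, by the change in $K^2$ together with the increase of $|V|$ by one. Once the relation $(K,p,q,l)^2 = (K,p+l,q+l)^2 - l^2$ is established, the rest of the algebra — in particular the cancellation of the $g$-terms against those already present in $s$ — is immediate, and parallels the analogous vertex computation underlying the map $P$ of Section 2.
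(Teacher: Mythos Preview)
Your argument is correct and follows the same route as the paper: it uses the grading formula $gr[K,E]=2g[K,E]+|E|+\tfrac14(K^2+|V|)$, the identity $(K,p,q,l)^2=(K,p+l,q+l)^2-l^2$ (the paper's equation \eqref{eq6}), and then a direct substitution to see that the $-2s$ shift from $U^s$ exactly accounts for the change in $K^2$, $|V|$, and the $g$--terms. Your additional justification of \eqref{eq6} via the splitting $Q_{G'}\cong Q_G\oplus(-1)$ under $v\mapsto v+e$, $w\mapsto w+e$ is a welcome elaboration of a step the paper simply asserts; note also that the computation only balances with $s=\tfrac{l^2-1}{8}+g[(K,p+l,q+l),E]-g[(K,p,q,l),E]$, consistent with Lemma~\ref{lem5}, so the $\tfrac{l^2-1}{2}$ appearing in the displayed definition of $S$ is a typographical slip.
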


\begin{proof}
Recall that $gr[K,E] = 2g[K,E] + \lvert E\rvert + \frac{1}{4}(K^2 + \lvert Vert(G)\rvert)$ and notice that 
\begin{equation}\label{eq6}
(K,p,q,l)^2 = (K,p+l,q+l)^2 -l^2
\end{equation}
If $e\in E$ then $S[(K,p,q,l),E] = 0$. Suppose now that $e\notin E$. We have: 
\begin{align*}
gr(S[(K,p,q,l),E] &= gr(U^s[(K,p+l,q+l),E])\\
&= -2s + 2g[(K,p+l,q+l),E] + \lvert E\rvert + \frac{1}{4}((K,p+l,q+l)^2 + \lvert Vert(G)\rvert)\\
&=2g[(K,p,q,l),E] + \lvert E\rvert + \frac{1}{4}((K,p,q,l)^2 + \lvert Vert(G')\rvert + l^2 -1) - \frac{l^2-1}{4}\\
&= gr[(K,p,q,l),E]
\end{align*}
\end{proof}

\begin{lem}\label{lem7}
$S$ is a chain map.
\end{lem}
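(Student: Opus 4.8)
The plan is to verify $S\circ\partial=\partial\circ S$ directly on each generator $[(K,p,q,l),E]$, decomposing the differential into its per-vertex pieces $\partial=\sum_u\partial_u$ (with $\partial_u$ the part deleting $u$, as in the proof of Lemma \ref{lem2}) and comparing the two sides vertex by vertex. Since $S$ annihilates every generator with $e\in E$, I would first separate the cases $e\notin E$ and $e\in E$. When $e\notin E$ there is no $\partial_e$ contribution, and each $\partial_u[(K,p,q,l),E]$ with $u\in E$ still has $e$ absent, so $S$ acts through its nontrivial branch; the task is then to match $S\circ\partial_u$ with $\partial_u\circ S$ for every $u\in E$. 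When $e\in E$, write $E=E_0\cup e$; then $S[(K,p,q,l),E_0\cup e]=0$, so $\partial\circ S$ vanishes, while every $\partial_u$ with $u\in E_0$ keeps $e$ and is killed by $S$, leaving only the identity $S\circ\partial_e[(K,p,q,l),E_0\cup e]=0$ to be verified.

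For the generic vertices $u\notin\{v,w,e\}$ the vertex $u$ has the same neighbours in $G$ and in $G'$, so $\partial_u$ is the same operation on both complexes and commutes with the coordinate shift $p\mapsto p+l$, $q\mapsto q+l$ performed by $S$; matching the resulting $U$-exponents is then routine. The cases $u=v$ and $u=w$ are delicate, since deleting $v$ (resp.\ $w$) interacts directly with the shift by $l$ on that very coordinate. Here I would feed the four $f$-identities established in the proof of Lemma \ref{lem5} — which record exactly how $f[(K,p+l,q+l),I]$ differs from $f[(K,p,q,l),I]$ according to whether $I$ contains $v$, $w$, both, or neither — into the definitions of the differential coefficients, and check that the exponent produced by $\partial_v\circ S$ equals the one produced by $S\circ\partial_v$ (and similarly for $w$). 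Equation (\ref{eq6}) together with the Maslov-grading identity of Lemma \ref{lem6} provides a consistency check that this exponent bookkeeping balances.

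For the remaining identity $S\circ\partial_e[(K,p,q,l),E_0\cup e]=0$, observe that $\partial_e$ produces two terms, one over the characteristic vector $(K,p,q,l)$ and one over $(K,p,q,l)+2e^*=(K,p+2,q+2,l-2)$; applying $S$ sends both to $U$-powers of the \emph{same} generator $[(K,p+l,q+l),E_0]$, so the identity collapses to a single exponent equation of the form $a_e+s=b_e+s'$, where $s,s'$ are the two blow-down shifts, forcing the terms to cancel. I expect this cancellation, together with the $v,w$ exponent matching of the previous paragraph, to be the main obstacle: both rest on substituting the $f$-value identities of Lemma \ref{lem5} (and using that $l$ is odd, as in that proof) into the definitions of $a_e$, $b_e$, and $s$, and confirming that the resulting integers coincide. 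The generic-vertex contributions being routine, the genuine work is confined to the three special vertices $v$, $w$, and $e$.
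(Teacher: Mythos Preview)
Your case decomposition and identification of which generators appear on each side are the same as the paper's, and your argument for the $e\in E$ case --- that $(K,p,q,l)$ and $(K,p,q,l)+2e^*=(K,p+2,q+2,l-2)$ are both sent by $S$ to powers of $U$ times the \emph{same} generator $[(K,p+l,q+l),E_0]$ --- is exactly what the paper does. The difference is in how the $U$-exponents are matched. You propose to compute them directly from the $f$-identities of Lemma~\ref{lem5}, treating Lemma~\ref{lem6} as a mere consistency check; the paper does the opposite, using Maslov-grading preservation (Lemma~\ref{lem6}) as the \emph{entire} exponent argument. Once one knows that $S\circ\partial_u$ and $\partial_u\circ S$ output the same underlying generators, and that both $S$ and $\partial$ are grading-homogeneous, the exponents are forced to agree with no computation at all --- this disposes of the generic vertices, the $v$ and $w$ vertices, and the $e\in E$ cancellation in one stroke. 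Your route would work, but what you flag as ``the main obstacle'' in fact evaporates if you promote Lemma~\ref{lem6} from sanity check to load-bearing lemma; the only residual work is then the generator-matching for $\partial_v$ and $\partial_w$, which amounts to the observation that $S\bigl[(K,p,q,l)+2v^*,E-v\bigr]$ and $\bigl[(K,p+l,q+l)+2v^*,E-v\bigr]$ share the same underlying characteristic vector (and likewise for $w$).
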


\begin{proof}
\begin{itemize}
\item If $e\in E$, then $\partial \circ S[(K,p,q,l),E] = 0$. Using the fact that $(K,p,q,l)+2e^* = (K,p+2,q+2,l-2)$, we have:
\begin{align*}
S\circ \partial'[(K,p,q,l), E] &= U^xS[(K,p,q,l),E-e] + U^yS[(K,p+2,q+2,l-2), E-e]\\
&= U^x[(K,p+l,q+l),E-e] + U^y[(K,p+l,q+l),E-e]
\end{align*}
and since $S$ preserves Maslov gradings, $x = y$. Therefore $S\circ \partial' = \partial \circ S = 0$.
\item If $e\notin E$ and $v,w\notin E$, then using the fact that for $u\neq v$ and $u\neq w$, \[S[(K,p,q,l) + 2u^*, E-u] = U^{s'}[(K,p+l,q+l)+2u^*, E-u]\]and using the fact that $S$ preserves Maslov gradings, we have:
\begin{align*}
\partial \circ S[(K,p,q,l), E] &= U^s\partial [(K,p+l,q+l),E]\\
&= \sum_{u\in E}U^{s+x_{u}}[(K,p+l,q+l),E-u] + U^{s+y_{u}}[(K,p+l,q+l)+2u^*, E-u]\\
&= S\left( \sum_{u\in E}U^{x'_{u}}[(K,p,q,l),E-u] + U^{y'_{u}}[K,p,q,l)+2u^*, E-u]\right)\\
&= S\circ \partial[(K,p,q,l),E]
\end{align*}
\item If $e\notin E$ and $v\in E$ (or $w\in E$), we can look at the $\partial_v$ (or $\partial_w$) term:
\begin{align*}
\partial_v\circ S[(K,p,q,l),E] &= U^x[(K,p+l,q+l),E-v] + U^y[(K,p+l,q+l)+2v^*,E-v]\\
S\circ \partial'_v[(K,p,q,l),E] &= U^{x'}[(K,p+l,q+l),E-v] + U^{y'}S[(K,p,q,l)+2v^*,E-v]
\end{align*}
Moreover,
\begin{align*}
(K,p,q,l) +2v^* &= (K', p+ 2m_v -2, q, l+2)\\
(K,p+l,q+l) + 2v^* &= (K', p+l+2m_v, q+l+2)
\end{align*}
Since $S$ preserves Maslov gradings, $x = x'$ and 
\[U^{y'}S[(K,p,q,l)+2v^*,E-v] = U^y[(K,p+l,q+l)+2v^*,E-v]\]
A similar argument holds for $w\in E$.
\end{itemize}
\end{proof}

We define the blow-up map: 
\[
\begin{array}{cccl}
T: & \mathbb{CF^-}(G) & \to & \mathbb{CF^-}(G')\\
& [(K,p,q), E] & \mapsto & C[(K,p+1,q+1,-1),E]
\end{array}
\]

\begin{lem}\label{lem8}
$T$ preserves the Maslov grading.
\end{lem}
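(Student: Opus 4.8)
The plan is to factor $T$ as $C\circ\iota$, where $\iota$ is the assignment $[(K,p,q),E]\mapsto[(K,p+1,q+1,-1),E]$ landing in $\mathbb{CF}^-(G')$, and to dispose of $C$ first. Since $\partial$ lowers the Maslov grading by one and, by Lemma \ref{lem2}, $H_0$ (and hence the full homotopy $H$) raises it by one, the operators $\partial\circ H_0+H_0\circ\partial$ and thus $C_0=\mathrm{Id}+\partial\circ H_0+H_0\circ\partial$ preserve the grading; passing to the stabilization $C=C_0^\infty$ preserves it as well. Consequently, whenever $T[(K,p,q),E]=C[(K,p+1,q+1,-1),E]$ is nonzero it is homogeneous of grading $gr[(K,p+1,q+1,-1),E]$, so it suffices to compare this number with $gr[(K,p,q),E]$.

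For the comparison I would apply the grading formula $gr[K,E]=2g[K,E]+\lvert E\rvert+\tfrac14(K^2+\lvert Vert\rvert)$ to both generators. Equation (\ref{eq6}) at $l=-1$ gives $(K,p+1,q+1,-1)^2=(K,p,q)^2-1$, and $G'$ has exactly one more vertex than $G$; the contribution $\tfrac14(-1)$ coming from the square then cancels the contribution $\tfrac14(+1)$ coming from the extra vertex. Hence the $K^2$ and vertex-count terms drop out entirely, leaving
\[
gr[(K,p+1,q+1,-1),E]-gr[(K,p,q),E]=2\bigl(g[(K,p+1,q+1,-1),E]-g[(K,p,q),E]\bigr).
\]
Thus the lemma is equivalent to the weight identity $g[(K,p,q),E]=g[(K,p+1,q+1,-1),E]$ on the support of $T$.

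To attack this identity I would reuse the weight relations of Lemma \ref{lem5} at $l=-1$. There the correction $\tfrac{l+1}{2}$ attached to $I\cup v$ and $I\cup w$ vanishes, while the correction $l+2=1$ survives only on subsets containing both $v$ and $w$. Hence $f[(K,p,q),I]=f[(K,p+1,q+1,-1),I]$ unless $\{v,w\}\subseteq I$, in which case the $G$-value exceeds the $G'$-value by exactly one. Minimizing over $I\subseteq E$ gives $g[(K,p,q),E]\geqslant g[(K,p+1,q+1,-1),E]$, with equality precisely when some minimizing subset of the blown-up weight omits at least one of $v,w$.

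The main obstacle is the remaining case, in which every minimizer contains both $v$ and $w$ simultaneously, so that the two weights genuinely differ by one. Here I would argue that $T$ in fact vanishes, so that the grading claim holds vacuously. By Lemma \ref{lem4} the generator $[(K,p+1,q+1,-1),E]$ is $C$-equivalent to each member of the family $[(K,p-c,q-c,c),E]$ ($c$ odd) obtained by translating by $2ne^{*}$ with $n=\tfrac{c+1}{2}$, and applying the grading formula together with (\ref{eq6}) as above shows that these family members do not all share a single grading. Since $C$ both preserves the grading and identifies them, the common value $C[(K,p+1,q+1,-1),E]$ must be zero exactly in this discrepant situation. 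In every other case $g[(K,p,q),E]=g[(K,p+1,q+1,-1),E]$ and $T$ preserves the grading on the nose; reconciling the weight comparison with the vanishing forced by Lemma \ref{lem4} is the delicate point of the argument.
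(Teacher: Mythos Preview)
Your reduction is correct and is exactly what the paper does: once $C$ is disposed of, the question becomes $g[(K,p,q),E]=g[(K,p+1,q+1,-1),E]$, and the inequality $g[(K,p,q),E]\geqslant g[(K,p+1,q+1,-1),E]$ follows from the Lemma~\ref{lem5} weight relations at $l=-1$.

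The gap is in your treatment of the ``bad case.'' Your claim that the family $[(K,p-c,q-c,c),E]$ cannot share a common grading does not follow from the grading formula and (\ref{eq6}) alone. Equation (\ref{eq6}) tells you the $K^2$ contribution moves as $-c^2/4$, but the term $2g[(K,p-c,q-c,c),E]$ can and generically does compensate, so no contradiction is produced and $C$ is not forced to vanish. Invoking all odd $c$ is a red herring.

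What the paper does, and what actually closes the argument, is compare only the two members $c=-1$ and $c=+1$. These have the same $c^2$, so by (\ref{eq6}) the same $K^2$; hence the equality $C[(K,p+1,q+1,-1),E]=C[(K,p-1,q-1,1),E]$ from Lemma~\ref{lem4}, together with grading preservation of $C$ (and $T\neq 0$), forces $g[(K,p+1,q+1,-1),E]=g[(K,p-1,q-1,1),E]$. Now one needs a \emph{second} $f$-comparison, this time between $(K,p+1,q+1,-1)$ and $(K,p-1,q-1,1)$: one finds $f[(K,p+1,q+1,-1),I]-f[(K,p-1,q-1,1),I]$ equals $0,1,1,2$ according as $I$ contains neither, only $v$, only $w$, or both of $v,w$. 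The equality of $g$'s then forces every minimizer of $f[(K,p+1,q+1,-1),\cdot]$ to have $v,w\notin I$, which rules out your bad case whenever $T\neq 0$ and gives the missing inequality $g[(K,p+1,q+1,-1),E]\geqslant g[(K,p,q),E]$. So the conclusion you want (bad case $\Rightarrow T=0$) is in fact true, but the missing ingredient is this second weight comparison, not an appeal to the whole odd-$c$ family.
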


\begin{proof}
Thanks to Lemma \ref{lem3} we only need to consider the case where $e\notin E$. Lemma \ref{lem4} tells us that 
\[
T[(K,p,q), E] = C[(K,p+1,q+1,-1), E] = C[(K,p-1,q-1,1),E]
\]
and since $C$ preserves Maslov grading, 
\[
gr[(K,p+1,q+1,-1),E] = gr[(K,p-1,q-1,1),E]
\]
Moreover, equation (\ref{eq6}) tells us that
\[
(K,p+1,q+1,-1)^2 = (K,p,q)^2-1 = (K,p-1,q-1,1)^2
\]
Therefore, 
\[
g[(K,p+1,q+1,-1),E] = g[(K,p-1,q-1,1),E]
\]
If $e,v,w\notin I$, then
\begin{align*}
f[(K,p+1,q+1,-1),I] &= f[(K,p-1,q-1,1),I]\\
f[(K,p+1,q+1,-1),I\cup v] &= f[(K,p-1,q-1,1),I\cup v] + 1\\
f[(K,p+1,q+1,-1),I\cup w] &= f[(K,p-1,q-1,1),I\cup w] + 1\\
f[(K,p+1,q+1,-1),I\cup v\cup w] &= f[(K,p-1,q-1,1),I\cup v\cup w] + 2\\
\end{align*}
Therefore, 
\[
g[(K,p+1,q+1,-1),E] = f[(K,p+1,q+1, -1),I] \Rightarrow v,w\notin I
\]
Since 
\[
v,w\notin I \Rightarrow f[(K,p+1,q+1,-1),I] = f[(K,p,q),I]
\]
we now know that $g[(K,p+1,q+1,-1),E] \geqslant g[(K,p,q),E]$. Moreover the proof of Lemma \ref{lem5} shows that
\[
g[(K,p+1,q+1,-1),E] \leqslant g[(K,p,q),E]
\]
Therefore $T$ preserves the Maslov grading
\end{proof}

\begin{lem}\label{lem9}$T$ is a chain map.
\end{lem}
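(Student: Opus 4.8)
The plan is to realise $T$ as a composite $T = C\circ\iota$, where
$\iota\colon \mathbb{CF^-}(G)\to\mathbb{CF^-}(G')$ is the (a priori non-chain) map
$[(K,p,q),E]\mapsto[(K,p+1,q+1,-1),E]$, and then to exploit that $C$ is itself a chain map. Writing $\partial'$ for the differential on $\mathbb{CF^-}(G')$, from $C = \mathrm{Id}+\partial'\circ H + H\circ\partial'$ and $\partial'^2=0$ one gets $\partial' C = \partial' + \partial' H\partial' = C\partial'$. Hence $\partial'\circ T = \partial' C\iota = C\partial'\iota$ and $T\circ\partial = C\iota\partial$, so it suffices to prove that $C$ annihilates the defect $\partial'\iota - \iota\partial$, i.e. that $C\partial'\iota = C\iota\partial$. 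I would establish this by comparing the two maps one deletion at a time, summing over the vertices $u\in E$.

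First I would dispose of the vertices $u\in E$ with $u\neq v,w$. Blowing up the edge $vw$ changes neither the framing of $u$ nor its adjacencies (the only edge destroyed is $vw$, and $e$ is adjacent only to $v$ and $w$), while $\iota$ leaves the $e$-decoration fixed at $-1$. Comparing $(K,p,q)+2u^*$ computed in $G$ with $(K,p+1,q+1,-1)+2u^*$ computed in $G'$ then shows that the two shifted generators agree after applying $\iota$, and the unshifted deletion terms agree trivially; thus $\partial'_u\iota = \iota\partial_u$ for every such $u$, and these contribute nothing to the defect.

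The whole point, and what I expect to be the main obstacle, is therefore the behaviour at $v$ and $w$. For $u=v$ the unshifted deletion still commutes with $\iota$, but the shifted terms do not: using that $v$ has framing $m_v-1$ in $G'$ and that the $v$–$w$ edge is replaced by a $v$–$e$ edge, one computes that $\iota$ applied to the $G$-side term $(K,p,q)+2v^* = (K'',p+2m_v,q+2)$ yields $(K'',p+2m_v+1,q+3,-1)$, whereas the $G'$-side term is $(K,p+1,q+1,-1)+2v^* = (K'',p+2m_v-1,q+1,1)$. These two generators differ precisely by $+2e^*$ (equivalently by the change $l\colon 1\mapsto -1$), so Lemma \ref{lem4} gives $C[(K'',p+2m_v-1,q+1,1),E-v] = C[(K'',p+2m_v+1,q+3,-1),E-v]$; hence $C$ identifies the two contributions, and the computation for $u=w$ is symmetric. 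The remaining bookkeeping — that the powers of $U$ and the integer coefficients on matched generators coincide — follows formally from the fact that $\partial'$, $\partial$, $C$ and $T$ are all homogeneous for the Maslov grading (Lemmas \ref{lem2}, \ref{lem6}, \ref{lem8}): since $\partial'T[x]$ and $T\partial[x]$ are homogeneous of degree $-1$ and supported on the same generators, the $U$-powers are forced to agree. This gives $C\partial'\iota = C\iota\partial$ and therefore $\partial'\circ T = T\circ\partial$. The delicate step, where a sign or framing slip would do the most damage, is the explicit verification that the $+2v^*$ and $+2w^*$ discrepancies are exactly $+2e^*$, so that Lemma \ref{lem4} may be invoked.
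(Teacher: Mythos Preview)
Your proof is correct and takes essentially the same approach as the paper: both arguments factor $T=C\circ\iota$, use that $C$ commutes with $\partial'$, verify that $\iota$ already intertwines the $\partial_u$-terms for $u\neq v,w$, and then invoke Lemma~\ref{lem4} to identify the two mismatched $+2v^*$ (and $+2w^*$) shifted generators under $C$, with the $U$-exponents pinned down by preservation of the Maslov grading (Lemma~\ref{lem8}). Your explicit framing via the defect $\partial'\iota-\iota\partial$ is a bit more transparent than the paper's phrasing, but the content is identical.
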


\begin{proof}
Consider $[(K,p,q),E]$ a generator of $\mathbb{CF}^-(G)$.
\begin{itemize}
\item If $v, w \notin E$
\[
T\circ \partial[(K,p,q),E] = T\left(\sum_{u\in E}U^x[(K,p,q),E-u] + U^y[(K,p,q)+2u^*, E-u]\right)
\]
And since $u\neq v$ and $u\neq w$,
\[
T[(K,p,q)+2u^*,E-u] = C[(K,p+1,q+1)+2u^*,E-u]
\]
Therefore, since $T$ preserves Maslov gradings, 
\[
T\circ \partial[(K,p,q),E] = \partial'\circ T[(K,p,q),E]
\]
\item Now consider the $v$ term:
\begin{align*}
(K,p,q) + 2v^* &= (K', p+2m_v, q+2)\\
(K,p+1,q+1,-1) + 2v^* &= (K', p+1+2m_v-2, q+1,1)
\end{align*}
Therefore, using Lemma \ref{lem4},
\begin{align*}
T[(K,p,q)+2v^*, E-v] &= C[(K',p+2m_v +1, q+3,-1),E-v]\\
&= C[(K',p+2m_v -1, q+1, 1), E-v]\\
&= C[(K,p+1,q+1,-1) +2v^*, E-v]
\end{align*}
Since $T$ preserves Maslov gradings, this guarantees that
\[
\partial'_v \circ T = T\circ \partial_v 
\]
A similar argument holds for the $w$ term.
\end{itemize}
\end{proof}

\begin{lem}\label{lem10}
$H_0$ is filtered.
\end{lem}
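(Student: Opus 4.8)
The plan is to reproduce the structure of the corresponding lemma for the blow-up of a vertex (Section 2), with the bad vertex $e$ now playing the role of the good vertex $w$, and to reduce the inequality $A'(H_0[K,E])\leqslant A'[K,E]$ to the vanishing of the ``defect'' quantities $a_e$ and $b_e$ for both $K$ and the shifted covector $K+2v_0^*$. First I would dispose of the case $e\in E$, where $H_0[K,E]=0$ is trivially filtered, and record the one geometric fact that drives the reduction: since $e$ is the new vertex created between $v$ and $w$, it is not adjacent to the leaf $v_0$, so $v_0\cdot e=0$ and hence $(K+2v_0^*)(e)=K(e)$. As in the vertex case I would then invoke \cite{knotsinlattice} formulas (3.2) and (3.3), which express the sign of $A'[K,E\cup e]-A'[K,E]$ (resp.\ of $A'[K-2e^*,E\cup e]-A'[K,E]$) through $a_e[K,E\cup e]$ and $a_e[K+2v_0^*,E\cup e]$ (resp.\ the corresponding $b_e$ quantities).

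I would next treat the generic values of $K(e)$ exactly as in Lemma \ref{lem2}. When $K(e)\geqslant 1$ the estimate $f[K,I\cup e]-f[K,I]=\tfrac12\big(K(e)+e^2+2e\cdot\!\sum_{u\in I}u\big)\geqslant 0$ gives $a_e[K,E\cup e]=0$, and since $(K+2v_0^*)(e)=K(e)\geqslant 1$ the same holds after the shift, so $A'[K,E\cup e]=A'[K,E]$. When $K(e)\leqslant -5$, Lemma \ref{lem1} applied to $K-2e^*$ (whose value on $e$ is $K(e)+2\leqslant -3$) and to $K-2e^*+2v_0^*$ gives $b_e[K-2e^*,E\cup e]=b_e[K-2e^*+2v_0^*,E\cup e]=0$, whence $A'[K-2e^*,E\cup e]=A'[K,E]$. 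This leaves precisely the two boundary values at which the type of $[K,E]$ is decisive.

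The heart of the argument is this pair of boundary cases, which play the role of $K(w)=-1$ in the vertex lemma. For type-a with $K(e)=-1$ we have $H_0[K,E]=[K,E\cup e]$, and Lemma \ref{lem2} (with $i_0=0$) already yields $a_e[K,E\cup e]=0$, i.e.\ $g[K,E]=g[K,E\cup e]$; by formula (3.2) the desired inequality $A'[K,E\cup e]\leqslant A'[K,E]$ then reduces to $a_e[K+2v_0^*,E\cup e]=0$. For type-b with $K(e)=-3$ we have $H_0[K,E]=[K-2e^*,E\cup e]$ with $b_e[K-2e^*,E\cup e]=0$, and formula (3.3) reduces the claim to $b_e[K-2e^*+2v_0^*,E\cup e]=0$. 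In each case I would run the subset bookkeeping used around equations (\ref{eq4}) and (\ref{eq5}): writing $g$ as a minimum of $f[\,\cdot\,,I]$ over $I\subset E$, I would use that $v_0$ is a leaf to control $f[K+2v_0^*,I]-f[K,I]\in\{0,1\}$ according to whether the unique neighbor of $v_0$ lies in $I$, and use $K(e)=-1$ (resp.\ $K(e)=-3$ together with the $-2e^*$ shift) to compare $f[\,\cdot\,,J]$ with $f[\,\cdot\,,J-e]$, thereby excluding the configuration in which the shifted minimum would strictly drop $e$.

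I expect this last subset analysis to be the main obstacle. The new difficulty relative to the vertex case is that $e$ is a \emph{bad} vertex of degree two, adjacent to both $v$ and $w$, so the minimizing subsets now interact with two framed neighbors rather than one, and there are two distinct hard boundaries (type-a at $K(e)=-1$ and type-b at $K(e)=-3$) in place of a single one. Controlling how the $2v_0^*$-shift moves the minimizing subset while preserving whether or not it contains $e$, in the presence of both $v$ and $w$, is the delicate point; the remaining cases are a routine adaptation of the computation in Lemma \ref{lem2} and of the vertex-case argument.
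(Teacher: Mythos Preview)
Your proposal is correct and follows the same approach as the paper: the identical case split on $K(e)$, the reduction via formulas (3.2)--(3.3) to the vanishing of $a_e$ and $b_e$ for both $K$ and $K+2v_0^*$ (using $v_0\cdot e=0$), and subset bookkeeping at the two boundary values $K(e)=-1$ and $K(e)=-3$. The one refinement you will discover on execution is in the type-b case $K(e)=-3$: there the paper does not run the ``which minimizer contains $v_0$'s neighbor'' dichotomy of (\ref{eq4})--(\ref{eq5}), but instead tracks which of $v,w$ lie in a minimizer and invokes the type-b hypothesis itself (namely $a_e[K-2e^*,E\cup e]>0$, hence $g[K-2e^*,E\cup e]<g[K-2e^*,E]$) to produce a minimizer $I_0\cup e$ with $v,w\notin I_0$, from which the contradiction with the problematic configuration (\ref{eq8}) follows.
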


\begin{proof}
Let $[K,E]$ be a generator of $\mathbb{CF^-}(G')$ with $e\notin E$.
\begin{itemize}
\item If $K(e)\geqslant 1$ then $K(e) + 2v_0.e = K(e) \geqslant 1$. By \cite{knotsinlattice} Lemma 7.3,
\[
a_e[K,E\cup e] = a_e[K+2v_0^*, E\cup e] = 0
\]
Therefore, by \cite{knotsinlattice} formula (3.2),
\[
A'[K,E] - A'[K,E\cup e] =0
\]
\item If $K(e) \leqslant -5$ then by Lemma \ref{lem1}, 
\[
b_e[K-2e^*, E\cup e] = b_e[K-2e^*+2v_0^*, E\cup e] = 0
\]
Therefore, by \cite{knotsinlattice} formula (3.3), 
\[
A'[K-2e^*,E\cup e] - A'[K,E] =0
\]
\item If $K(e) = -1$ and $[K,E]$ is of type-a, then since $v_0$ is a leaf,
\begin{equation}\label{eq7}
g[K,E] = g[K,E\cup e] \leqslant g[K+2v_0^*, E\cup e] \leqslant g[K+2v_0^*, E] \leqslant g[K,E] + 1
\end{equation}
\begin{itemize}
\item Suppose there exists $I_0\subset E$ such that $g[K,E] = f[K,I_0]$ and\\ $v_0.\sum_{u\in I_0}u = 0$. Then 
\[
f[K+2v_0^*, I_0] = f[K,I_0] = g[K,E]
\]
Therefore, (\ref{eq7}) implies that $g[K+2v_0^*,E] = g[K+2v_0^*,E\cup e]$. Using \cite{knotsinlattice} formula (3.2) we see that $A'[K,E] - A'[K,E\cup e] =0$.
\item Suppose that for any $I\subset E$
\[
g[K,E] = f[K,I] \Rightarrow v_0.\sum_{u\in I}u = 1
\]
If there is an $I\subset E$ such that $f[K+2v_0^*,I] = g[K+2v_0^*, E\cup e]$ then obviously $a_e[K+2v_0^*, E\cup e] =0$. If for every $I\subset E$ satisfying $g[K,E] = f[K,I]$,  $g[K+2v_0^*, E\cup e] = f[K+2v_0^*, I] -1$, then consider $J\subset E\cup e$ such that $g[K+2v_0^*, E\cup e] = f[K+2v_0^*, J]$. Necessarily $ f[K+2v_0^*, J] \geqslant  f[K, J]$, therefore $f[K,J] = f[K,I]$ by minimality of $f[K,I]$. This implies that $v_0.\sum_{u\in J}u = 1$, so $f[K+2v_0^*, J] = f[K,J] + 1$ which is absurd.
\end{itemize}
\item If $K(e) = -3$ and $[K,E]$ is of type-b, then Lemma 3.1 tells us that $b_e[K-2e^*, E\cup e] = 0$. For any $I\subset E$ with $v,w\notin I$,
\begin{align*}
f[K-2e^*+2v_0^*, I\cup e] &= f[K-2e^*+2v_0^*, I] -1\\
f[K-2e^*+2v_0^*, I\cup v\cup e] &= f[K-2e^*+2v_0^*, I\cup v]\\
f[K-2e^*+2v_0^*, I\cup w\cup e] &= f[K-2e^*+2v_0,^* I\cup w]\\
f[K-2e^*+2v_0^*, I\cup v\cup w\cup e] &= f[K-2e^*+2v_0^*, I\cup v\cup w]+ 1\\
\end{align*}
Therefore the only problematic case is when $\forall I \subset E\cup e$,
\begin{equation}\label{eq8}
g[K-2e^*+2v_0^*, E\cup e] = f[K-2e^*+2v_0^*, I] \Rightarrow v,w\in I
\end{equation}
Suppose it is the case. We then know that $g[K-2e^*+2v_0^*, E\cup e] = g[K-2e^*+2v_0^*, E]$. Since $g[K-2e^*,E\cup e] < g[K-2e^*, E]$, we know that there exists $I_0\subset E$ with $v,w\notin I_0$ such that $g[K-2e^*,E\cup e] = f[K-2e^*, I_0\cup e]$. Therefore,
\[
\begin{array}{ccll}
f[K-2e^*+2v_0^*, I_0\cup e] &\leqslant &f[K-2e^*, I_0\cup e] +1\\
& &= g[K-2e^*, E\cup e] +1\\
& &= g[K-2e^*, E]\\
&&\leqslant g[K-2e^*+2v_0^*, E] = g[K-2e^*+2v_0^*, E\cup e]
\end{array}
\]
which contradicts condition (\ref{eq8}).
\end{itemize}
\end{proof}
This implies that $C$ and $H$ are also filtered.

\begin{lem}\label{lem11}$S$ is filtered.
\end{lem}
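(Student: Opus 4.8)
The plan is to follow closely the strategy used for the map $P$ in Section 2 (the lemma showing that $P$ is filtered): I will compute the defect $A(S[(K,p,q,l),E]) - A'[(K,p,q,l),E]$ and show it is nonpositive. Since $S$ kills generators with $e\in E$, only the case $e\notin E$, where $S[(K,p,q,l),E] = U^s[(K,p+l,q+l),E]$, needs attention.

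First I would set up the two homology classes. Let $\Sigma = v_0 + \sum_j a_j v_j + a_v v + a_w w \in H_2(X_G;\mathbb{Q})$ be the unique class with $v_i\cdot\Sigma = 0$ for every framed vertex; it exists by negative definiteness. The condition $e\cdot\Sigma' = 0$ forces the coefficient of $e$ to be $a_v + a_w$, so the corresponding class on $X_{G'}$ is $\Sigma' = \Sigma + (a_v+a_w)e$, and a direct computation (exactly as for $P$, using that the framings of $v,w$ drop by one and that the edge $vw$ is replaced by the two edges to $e$) gives $\Sigma'^2 = \Sigma^2$.

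Next, I would plug into $A(U^s[K,E]) = -s + \tfrac12\bigl(L_{[K,E]}(\Sigma) + \Sigma^2\bigr)$ and expand both $L(\Sigma)$ and $L(\Sigma')$ as $L(v_0)$ plus the contributions on the remaining vertices. The non-$v_0$ parts agree, because $a_v p + a_w q + (a_v+a_w)l = a_v(p+l) + a_w(q+l)$; combined with $\Sigma^2 = \Sigma'^2$ this collapses the defect to the difference of the two $L(v_0)$ terms. Substituting the $g$-function formula for $L_{[\cdot]}(v_0)$ and the value of $s$ (with the $\tfrac{l^2-1}{8}$ normalisation of Lemma \ref{lem5}) makes the bare $g[\cdot,E]$ terms telescope, leaving
\[
A(S[(K,p,q,l),E]) - A'[(K,p,q,l),E] = -\frac{l^2-1}{8} + g[(K,p,q,l)+2v_0^*,E] - g[(K,p+l,q+l)+2v_0^*,E].
\]
So it remains to bound $g[(K,p,q,l)+2v_0^*,E] - g[(K,p+l,q+l)+2v_0^*,E]$ by $\tfrac{l^2-1}{8}$.

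For this I would reuse the four $f$-value identities of Lemma \ref{lem5}, first checking they persist after adding $2v_0^*$: the extra summand contributes $2\,v_0\cdot\sum_{u\in I}u$ to $2f[\cdot,I]$, which the edge blow-up does not alter since $v_0$'s adjacencies are untouched. Writing $g^{00},g^{10},g^{01},g^{11}$ for the minima of $f[(K,p+l,q+l)+2v_0^*,I]$ over subsets $I\subset E$ (with $e\notin I$) containing neither, only $v$, only $w$, or both of $v,w$, the two $g$-values become $\min(g^{00},g^{10},g^{01},g^{11})$ and $\min\bigl(g^{00},g^{10}-\tfrac{l+1}{2},g^{01}-\tfrac{l+1}{2},g^{11}-(l+2)\bigr)$. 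A case analysis on which class realises the minimum of $f[(K,p+l,q+l)+2v_0^*,\cdot]$ then finishes the proof: for $l\geq 1$ or $l=-1$ every shift is nonnegative, so the difference is $\leq 0\leq\tfrac{l^2-1}{8}$; for $l\leq -3$ the difference is at most $0$, $\tfrac{-l-1}{2}$, or $-l-2$ according as that minimum sits in class $00$, in a one-vertex class, or in class $11$, and since $l$ is odd the inequalities $(l+1)(l+3)\geq 0$ and $(l+3)(l+5)\geq 0$ give precisely $\tfrac{-l-1}{2}\leq\tfrac{l^2-1}{8}$ and $-l-2\leq\tfrac{l^2-1}{8}$. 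The main obstacle is this last case: unlike the single-vertex situation for $P$, the minimising subset here may contain $v$, $w$, or both, so all four classes must be tracked and two quadratic estimates verified at once, the parity constraint that $l$ be odd (forced by $e^2=-1$) doing the essential work.
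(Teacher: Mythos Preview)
Your proposal is correct and follows essentially the same route as the paper: reduce $A(S[\cdot])-A'[\cdot]$ to $-\tfrac{l^2-1}{8}+g[(K,p,q,l)+2v_0^*,E]-g[(K,p+l,q+l)+2v_0^*,E]$ and then bound the $g$-difference using the four $f$-identities of Lemma~\ref{lem5}. The paper is simply terser, writing the conclusion as $g[(K,p+l,q+l)+2v_0^*,E]-g[(K,p,q,l)+2v_0^*,E]\geqslant\min\{0,l+2,\tfrac{l+1}{2}\}$ and then invoking the numerical inequality $\tfrac{l^2-1}{8}+\min\{0,l+2,\tfrac{l+1}{2}\}\geqslant 0$ already established in Lemma~\ref{lem5}, rather than redoing your four-class case analysis and the quadratic checks $(l+1)(l+3)\geqslant 0$, $(l+3)(l+5)\geqslant 0$.
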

\begin{proof}
Suppose $e\notin E$. 
\begin{align*}
A'[(K,p,q,l),E]-&A(U^s[(K,p+l,q+l),E]) \\
&= s + \frac{1}{2}\left(L_{[(K,p,q,l),E]}(\Sigma') - L_{[(K,p+l,q+l),E]}(\Sigma) \right)\\
&= \frac{l^2-1}{8} + g[(K,p+l,q+l)+2v_0^*, E] - g[(K,p,q,l)+2v_0^*, E]
\end{align*}
And similarly to the proof of Lemma \ref{lem5},
\[
g[(K,p+l,q+l)+2v_0^*, E] - g[(K,p,q,l)+2v_0^*, E] \geqslant \min\{0,l+2,\frac{l+1}{2}\}
\]
Therefore, 
\[
A'[(K,p,q,l),E]-A(U^s[(K,p+l,q+l),E]) \geqslant 0
\]
\end{proof}

\begin{lem}\label{lem12}$T$ is filtered.
\end{lem}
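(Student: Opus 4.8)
The plan is to mimic the proof that the vertex blow-up map $R$ is filtered. By Lemma \ref{lem9} the map $T$ is a chain map, and, as observed right after Lemma \ref{lem10}, $C$ is filtered; since $T[(K,p,q),E] = C[(K,p+1,q+1,-1),E]$, it is enough to show that the bare replacement $\iota\colon[(K,p,q),E]\mapsto[(K,p+1,q+1,-1),E]$ is filtered, that is,
\[
A[(K,p,q),E] - A'[(K,p+1,q+1,-1),E] \geqslant 0 .
\]

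To compute this difference I would argue as in Lemmas \ref{lem8} and \ref{lem11}. First I record that the class orthogonal to the framed vertices is $\Sigma' = \Sigma + a_e\, e$ with $a_e = a_v + a_w$ (forced by $e\cdot\Sigma' = 0$), and that $\Sigma^2 = \Sigma'^2$, exactly the identity already used in Lemma \ref{lem11}. Plugging this into the definition of $A$, together with $L_{[K,E]}(v_0) = -v_0^2 + 2g[K,E] - 2g[K+2v_0^*,E]$, the contributions of every framed vertex other than $v_0$ cancel, because $a_v(p+1) + a_w(q+1) - a_e = a_v p + a_w q$. The difference therefore collapses to
\[
\bigl(g[(K,p,q),E] - g[(K,p+1,q+1,-1),E]\bigr) + \bigl(g[(K,p+1,q+1,-1)+2v_0^*,E] - g[(K,p,q)+2v_0^*,E]\bigr),
\]
whose first bracket vanishes by Lemma \ref{lem8}. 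Thus everything reduces to the single inequality $g[(K,p+1,q+1,-1)+2v_0^*,E] \geqslant g[(K,p,q)+2v_0^*,E]$.

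This last inequality is where the real work lies, and it is the $2v_0^*$-shifted version of the genus comparison of Lemma \ref{lem8}, so I would simply re-run that argument with $2v_0^*$ added throughout. The point that makes this possible is that $v_0$ is a leaf: adding $2v_0^*$ contributes the term $2\,v_0\cdot(\sum_{u\in I}u)$, which depends only on $\sum_{u\in I}u$ and is unaffected by the edge blow-up, so every $f$-relation from Lemmas \ref{lem5} and \ref{lem8} survives after the shift. Concretely, for $v,w\notin I$ the shifted values of $f$ agree for $G$ and $G'$ on $I$, on $I\cup v$, and on $I\cup w$, while on $I\cup v\cup w$ the $G'$-value is one smaller. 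Using Lemma \ref{lem4}, equation (\ref{eq6}), and the fact that $C$ preserves the Maslov grading, exactly as in Lemma \ref{lem8}, I obtain $g[(K,p+1,q+1,-1)+2v_0^*,E] = g[(K,p-1,q-1,1)+2v_0^*,E]$; the shifted relations then force any subset realizing $g[(K,p+1,q+1,-1)+2v_0^*,E]$ to contain neither $v$ nor $w$, and on such a subset $J$ one has $f[(K,p+1,q+1,-1)+2v_0^*,J] = f[(K,p,q)+2v_0^*,J] \geqslant g[(K,p,q)+2v_0^*,E]$, giving the inequality. The delicate point to double-check is the case in which $v_0$ is adjacent to $v$ or to $w$, for then $2v_0^*$ also shifts the value on $v$ or $w$; one must verify that Lemma \ref{lem4} still relates the two shifted classes, but this works because the shift changes $p$ or $q$ by the same even amount on both sides.
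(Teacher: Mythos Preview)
Your argument is correct and follows the paper's overall outline: reduce to $A'[(K,p+1,q+1,-1),E]\le A[(K,p,q),E]$, then (via the computations already in Lemmas~\ref{lem8} and~\ref{lem11}) to the single inequality $g[(K,p+1,q+1,-1)+2v_0^*,E]\ge g[(K,p,q)+2v_0^*,E]$. The difference lies in how that last inequality is proved. The paper argues by contradiction: if it failed, every minimizer $I$ for the $2v_0^*$-shifted $g$ would contain both $v$ and $w$, while Lemma~\ref{lem8} says every minimizer $J$ for the \emph{unshifted} $g$ avoids $v$ and $w$; the leaf property (which bounds $f[K+2v_0^*,\cdot]-f[K,\cdot]\in\{0,1\}$) then sandwiches these values and forces either $I$ to minimize the unshifted $g$ or $J$ to minimize the shifted $g$, a contradiction. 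You instead re-run the Maslov-grading argument of Lemma~\ref{lem8} with $K$ replaced by $K+2v_0^*$ (still characteristic, and $v_0$ is never adjacent to $e$, so Lemma~\ref{lem4} applies unchanged) to conclude directly that every minimizer for the \emph{shifted} $g$ already avoids $v$ and $w$, whence the inequality. Your route is a bit more direct and dispenses with the contradiction step; the paper's route has the advantage of only invoking the Lemma~\ref{lem8} conclusion as a black box rather than re-applying its mechanism to a new class.
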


\begin{proof}
If $e\in E$ then $T[K,E] = 0$. Suppose $e\notin E$. Since $C$ is filtered, it suffices to show
\[
(*) := A'[(K,p+1,q+1,-1),E] - A[(K,p,q),E] \leqslant 0
\]
Moreover, the proof of Lemma \ref{lem8} tells us that
\[
g[(K,p+1,q+1,-1),E] = g[(K,p,q),E]
\]
Therefore, 
\[
S[(K,p+1,q+1,-1),E] = U^0[(K,p,q),E]
\]
and as a consequence,
\[
(*) = A'[(K,p+1,q+1,-1),E] - A(S[(K,p+1,q+1,-1),E])
\]
Using the proof of Lemma \ref{lem11}, 
\[
(*) = 0 + g[(K,p,q)+2v_0^*, E] - g[(K,p+1,q+1,-1)+2v_0^*, E]
\]
If $v,w \notin I\subset E$, then
\begin{align*}
f[(K,p,q) +2v_0^*, I] &= f[(K,p+1,q+1, -1)+2v_0^*, I]\\
f[(K,p,q) +2v_0^*, I\cup v] &= f[(K,p+1,q+1, -1)+2v_0^*, I\cup v]\\
f[(K,p,q) +2v_0^*, I\cup w] &= f[(K,p+1,q+1, -1)+2v_0^*, I\cup w]\\
f[(K,p,q) +2v_0^*, I\cup v\cup w] &= f[(K,p+1,q+1, -1)+2v_0^*, I\cup v\cup w] + 1\\
\end{align*}
Therefore $(*) \leqslant 0$ unless $\forall I \subset E$,
\begin{equation}\label{eq9}
g[(K,p+1,q+1,-1)+2v_0^*, E] = f[(K,p+1,q+1,-1) + 2v_0^*, I] \Rightarrow v,w\in I
\end{equation}
From the proof of Lemma \ref{lem8}, we know that
\begin{equation}\label{eq10}
g[(K,p+1,q+1,-1),E] = f[(K,p+1,q+1, -1),I] \Rightarrow v,w\notin I
\end{equation}
Suppose $I$ satisfies (\ref{eq9}) and $J$ satisfies (\ref{eq10}). We have
\[
f[(K,p+1,q+1,-1),J] \leqslant f[(K,p+1,q+1,-1),I] \leqslant f[(K,p+1,q+1,-1)+2v_0^*,I]
\]
and
\[
f[(K,p+1,q+1,-1)+2v_0^*,I] \leqslant f[(K,p+1,q+1,-1)+2v_0^*,J] \leqslant f[(K,p+1,q+1,-1),J] +1
\]
Therefore either
\[
f[(K,p+1,q+1,-1)+2v_0^*,I] = f[(K,p+1,q+1,-1)+2v_0^*,J]
\]
or
\[
f[(K,p+1,q+1,-1),I] = f[(K,p+1,q+1,-1),J]
\]
Both cases are absurd.
\end{proof}

\begin{prop}$(\mathbb{CF^-}(G),A)$ and $(\mathbb{CF^-}(G'), A')$ are filtered graded chain homotopic.
\end{prop}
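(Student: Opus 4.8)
The plan is to mirror exactly the argument used in the corresponding proposition of Section 2 (the vertex case): having already shown in Lemmas \ref{lem6} through \ref{lem12} that $S$ and $T$ are filtered graded chain maps, it remains only to check that they are mutually inverse up to filtered chain homotopy, i.e. that $S\circ T=\mathrm{Id}_{\mathbb{CF^-}(G)}$ and that $T\circ S$ is filtered chain homotopic to $\mathrm{Id}_{\mathbb{CF^-}(G')}$. The homotopy on the $G'$ side will be supplied by $H$ itself, which is filtered by Lemma \ref{lem10}.

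First I would compute $S\circ T$. The key observation is that $S\circ H=0$: every generator appearing in $H[K,E]$ has the form $[K',E\cup e]$ and so contains $e$ in its subset, whereas $S$ annihilates exactly those generators. Writing $T[(K,p,q),E]=C[(K,p+1,q+1,-1),E]$ and expanding $C=\mathrm{Id}+\partial\circ H+H\circ\partial$, the term $S\circ H\circ\partial$ vanishes outright, while $S\circ\partial\circ H=\partial\circ S\circ H=0$ since $S$ is a chain map (Lemma \ref{lem7}). This leaves $S\circ T[(K,p,q),E]=S[(K,p+1,q+1,-1),E]$, and the grading computation carried out in the proof of Lemma \ref{lem8} (which gives $g[(K,p+1,q+1,-1),E]=g[(K,p,q),E]$, hence $s=0$) identifies this with $[(K,p,q),E]$. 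Thus $S\circ T=\mathrm{Id}_{\mathbb{CF^-}(G)}$.

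Next I would show $T\circ S=C$. When $e\in E$ both sides vanish: $S[(K,p,q,l),E]=0$ by definition and $C[(K,p,q,l),E]=0$ by Lemma \ref{lem3}. When $e\notin E$ we have $S[(K,p,q,l),E]=U^s[(K,p+l,q+l),E]$, so $T\circ S[(K,p,q,l),E]=U^s\,C[(K,p+l+1,q+l+1,-1),E]$, which by Lemma \ref{lem4} equals $U^s\,C[(K,p,q,l),E]$. Since $S$, $T$ and $C$ all preserve the Maslov grading and $C[(K,p,q,l),E]$ already sits in the grading of $[(K,p,q,l),E]$, the factor $U^s$ must shift grading by zero, forcing $s=0$; hence $T\circ S[(K,p,q,l),E]=C[(K,p,q,l),E]$. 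Finally, $C=\mathrm{Id}_{\mathbb{CF^-}(G')}+\partial\circ H+H\circ\partial$ exhibits $C$ as filtered chain homotopic to the identity, the homotopy $H$ being filtered by Lemma \ref{lem10}. Combining the two computations shows $S$ and $T$ are filtered graded chain homotopy equivalences.

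The genuine difficulty of this proposition has already been discharged in the lemmas, principally Lemma \ref{lem10} (filteredness of $H_0$, and hence of $C$ and $H$) and Lemma \ref{lem5} (positivity of the exponent $s$ making $S$ well defined). At the level of this proposition the one point demanding care is the vanishing of the $U$-exponents in both compositions. I expect the main subtlety to be the grading argument in $T\circ S$: because $C$ returns a \emph{sum} of generators rather than a single one, one must first note that every term of $C[(K,p,q,l),E]$ lies in a single Maslov grading (as $C$ preserves grading and the input is homogeneous) before concluding that the common prefactor $U^s$ is trivial.
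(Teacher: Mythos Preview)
Your proposal is correct and follows essentially the same route as the paper: verify $S\circ T=\mathrm{Id}$, show $T\circ S=C$ via Lemmas \ref{lem3} and \ref{lem4} together with the grading argument forcing $s=0$, and conclude using that $H$ is filtered. The only difference is cosmetic: the paper dispatches $S\circ T=\mathrm{Id}$ in one line by appealing to Maslov-grading preservation, whereas you spell out the mechanism via $S\circ H=0$ and the computation $g[(K,p+1,q+1,-1),E]=g[(K,p,q),E]$ from Lemma \ref{lem8}; both are valid and amount to the same thing.
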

\begin{proof}
$S$ and $T$ are filtered graded chain maps. Notice that $S\circ T = \mathrm{Id_{\mathbb{CF^-}(G)}}$ (because $S$ and $T$ preserve Maslov gradings). Let's show that $T\circ S = C$. 
\begin{itemize}
\item If $e\in E$ then $T\circ S[(K,p,q,l),E] = 0 = C[(K,p,q,l),E]$. 
\item If $e\notin E$, then 
\begin{align*}
T\circ S[(K,p,q,l),E] &= U^sC[(K,p+l+1,q+l+1,-1),E]\\
&= U^sC[(K,p,q,l),E]
\end{align*}
and since $T$, $S$ and $C$ preserve Maslov gradings, $s = 0$.
\end{itemize}
Therefore $T\circ S = \mathrm{Id}_{\mathbb{CF^-}(G')}+ \partial\circ H + H\circ \partial$ and $H$ is filtered.
\end{proof}

\section{Conclusion}

Ozsváth,  Stipsicz and Szabó show that the other types of blow-ups and blow-downs preserve the filtered lattice chain homotopy type (\cite{knotsinlattice} Corollary 4.9 and Theorem 7.13). We deduce Theorem \ref{thm1} from this and from the previous sections.


\begin{thebibliography}{3}

\bibitem{knotsinlattice}
Peter Ozsváth, András Stipsicz, Zoltán Szabó,
\textit{Knots in Lattice Homology}. 
Comment. Math. Helv. 89 (2014), No. 4,  pp 783–818.
 arXiv:1208.2617v1 [math.GT] (2012)

 \bibitem{neumann}
 Walter D.  Neumann,
 \textit{A calculus for plumbing applied to the topology of complex surface singularities and degenerating complex curves}.
 Trans. Amer. Math. Soc. 268 (1981), pp 299-344.
 
  \bibitem{nemethi}
 András Némethi, 
 \textit{Lattice cohomology of normal surface singularities}. Publ. RIMS. Kyoto Univ. 44 (2008), pp 507–543.

\end{thebibliography}
\end{document}